\providecommand{\U}[1]{\protect\rule{.1in}{.1in}}
\newtheorem{thm}{Theorem}[section]
\newtheorem{definition}[thm]{Definition}
\newtheorem{lem}[thm]{Lemma}
\newtheorem{cor}[thm]{Corollary}
\newtheorem{prop}[thm]{Proposition}
\theoremstyle{definition}
\newtheorem{exa}{Example}
\newtheorem*{rem}{Remark}
\newtheorem*{oldproof}{Proof}
\renewenvironment{proof}[1][{}]{\begin{oldproof}[#1]}{\qed\end{oldproof}}
\begin{document}

\title{Submodular functions in additive combinatorics problems for group actions and representations}
\author{Vincent Beck, Univ. Orl\'eans, CNRS IDP, UMR 7013,\\Route de Chartres, Orl\'eans F-45000, France \\C\'{e}dric Lecouvey, Univ. Tours, CNRS IDP, UMR 7013, \\Campus de Grandmont, Tours F-37000, France}
\maketitle

\begin{abstract}
This article deals with generalisations of some classical problems and results
in additive combinatorics of groups to the context of group actions or group
representations. We show that the classical methods are sufficiently deep to
extend to this wider context where, instead of two free transitive commuting
actions (left and right multiplications on the group), there is only one
single action. Following ideas of Hamidoune and Tao, our main tool is the
notion of $G$-invariant submodular function defined on power sets. We are
able to extend to this group action context results of Hamidoune and Tao as
well as results of Murphy and Ruzsa.

\end{abstract}


\section{Introduction}

Consider a multiplicative group $G$ acting on the left on a non empty set $X$.
When $A$ and $Y$ are respectively finite nonempty subsets of $G$ and $X$ what
can be said about the cardinality $\left\vert A\cdot Y\right\vert $ of the set
$A\cdot Y=\{a\cdot y\mid(a,y)\in A\times Y\}$ ? Here $a\cdot y$ means the
image of $y$ under the action of $a$. When $X=G$ and the action considered is
the action by left multiplication (thus $a\cdot y=ay$, the product of the two
elements in the group $G$), this question relates to additive (or here
multiplicative) combinatorics on groups and there exist in the literature
numerous results yielding lower and upper bounds for the cardinality of the
Kronecker product set $AY$ (see for example \cite{Nath} and \cite{Tao}). Among
them, Kneser's theorem is a corner stone claiming that in any Abelian group%
\[
\left\vert G_{AY}\right\vert +\left\vert AY\right\vert \geq\left\vert
A\right\vert +\left\vert Y\right\vert
\]
where $G_{AY}=\{g\in G\mid gAY=AY\}$ is the stabilizer of the product set $AY
$. This theorem does not remain true for non Abelian groups even it is not
immediate to find a simple counterexample. Therefore, if we consider
$G_{A\cdot Y}=\{g\in G\mid g\cdot(A\cdot Y)=A\cdot Y\}$, the inequality%
\begin{equation}
\left\vert G_{A\cdot Y}\right\vert +\left\vert A\cdot Y\right\vert
\geq\left\vert A\right\vert +\left\vert Y\right\vert \label{Kneser}%
\end{equation}
\emph{does not hold} in the general left action context.\ In contrast, it is
very easy to find a counterexample by considering the action of the symmetric
group $\mathfrak{S}_{n}$ on the set $\{1,\ldots,n\}$ (see Example
\ref{Example_CounterKneser}).

Although Kneser's theorem does not have an immediate generalisation in the
group action context, we shall see in this paper that it is nevertheless
possible to obtain interesting analogues of various other results in this
setting, most of them being inspired by results or tools coming from additive
combinatorics for non Abelian groups. Among them is the notion of submodular
function defined on subsets of $G$ or subsets of $X$.\ In fact, we will often
obtain two different families of statements by fixing $Y$ and letting $A$
running on $\mathcal{P}(G)$ (the power set of $G$) or fixing $A$ and letting
$Y$ running on $\mathcal{P}(X)$ (the power set of $X$). This is for example
the case for Theorems \ref{Th_Tao} and \ref{Th_Taod} which both are
declinations of the same theorem proved by Tao in \cite{Tao} for product sets
in general groups. Even if the group action context studied in this paper
presents some analogies with the combinatorics of groups (i.e.\! the case of
an action by multiplication), it is worth mentioning that there are important
differences. Maybe the most important comes from the fact that the
multiplication in a group can be performed on the left and on the right and
that it corresponds to the case of two free commuting actions on $G$ whereas a
group action on $X$ is only one-hand sided. This makes many classical tools
like the Dyson or Diderrich transforms on subsets of groups (see for example
\cite{Die} and \cite{Nath}) irrelevant for group actions.\ 

The present paper can also be regarded as a contribution to the general
project aiming at extending methods developed in additive combinatorics of
groups to more general contexts and, as such, it has been thought to be as
self-contained as possible.\ In the linear context, where the cardinalities of
sets are replaced by the dimensions of vectors spaces, this was initiated in
\cite{Hou0} for field extensions and developed in particular in \cite{BCZ},
\cite{EL2}, \cite{Lec} (for fields and division rings) and in \cite{BL} and
\cite{MZ} (for associative algebras). As far as we are aware, the group action
setting presented in this paper was first considered very more recently in
\cite{Mur} and \cite{Murp2} in connection with the notion of approximate
groups. Our approach here, based on tools coming from group theory and on the
notion of submodular function, is different. Most often, we are also able to
state linear analogues of our results where group actions on finite sets are
replaced by finite-dimensional group representations.

\bigskip

Let us now describe more precisely the content of the paper.
Section~\ref{sec-action} is devoted to the presentation of the context of the
article: group actions and representations. Section~\ref{sec-classical}
presents methods and examples that extend positively or negatively to the
context of group actions. Our aim is also to show that not every result can be
generalised to the group action context. In particular, we explain how the
problem of determining lower and upper bounds for the previous cardinality
$\left\vert A\cdot Y\right\vert $ can theoretically be reduced to the
classical group setting when sufficient information on the orbit decomposition
and the stabilizers of the elements is available.\ This is for example the
case for free actions. Nevertheless in general, this reduction is not easy to
perform and the results are not so simple and elegant as in the group setting.
We also consider the particular case of a faithful action which gives 
straightforward counterexamples to Kneser's inequality~(\ref{Kneser}). For more
positive results, we establish results in the spirit of the paper \cite{Mur}
by Murphy and we give an analogue of a theorem by Ruzsa~\cite{Ruz} for the
action of a product set $AB$ in the group $G$ on a subset $Y$ of $X$ to
illustrate that many other classical results in additive combinatorics
certainly have interesting counterparts in the group action context. The
further sections are devoted to the use of submodular functions to generalise
theorems of classical additive group theory to the context of group actions
and group representations. Section~\ref{sec-submodular} presents the notion of
submodular map and give the standard examples that will be studied in the
following. We define in particular a natural analogue of the classical graph
cut submodular function (see Proposition~\ref{Prop_Submod_cut}).
Section~\ref{sec-fragment-atom} develops the theory of fragment and atom of
Hamidoune (see \cite{Hami2} and \cite{Tao3}) in the context of group action
and representations. In particular, Proposition \ref{Prop_LMF} gives some
information on the structure of the atoms associated to a $G$-invariant
submodular function defined on $\mathcal{P}(X)$.\ In
Section~\ref{sec-application}, we state and prove the analogues of theorems by
Hamidoune, Tao and Petridis in our group action and group representation
setting which are at the heart of this paper. These analogues rely on the
submodularity of the maps introduced in Section~\ref{sec-submodular}. We also
study in details the fragment for one of these maps. Finally, in
Section~\ref{sec-generelisation}, we end our article with another extension of
a classical result whose proof needs the notion of submodular map on a lattice.

\section{Group actions and representations context}

\label{sec-action}


In the sequel we consider $G$ a group and $X$ a set on which $G$ acts. As
usual, for any $(g,x)\in G\times X$, we shall denote by $g\cdot x$ the element
of $X$ corresponding to the action of $g$ on $x$. Let us write%
\[
G_{x}=\{g\in G\mid g\cdot x=x\}
\]
for the stabilizer of $x$ in $G$. For any subset $Y\subset X$ and any $g\in G
$, set $g\cdot Y=\{g\cdot y\mid y\in Y\}$. Let%
\[
G_{Y}=\{g\in G\mid g\cdot Y=Y\}
\]
be the stabilizer of $Y$ in $G$. Observe that for any fixed $g\in G$, the map%
\begin{equation}
\left\{
\begin{array}
[c]{r@{\,}c@{\,}l}%
X & \longrightarrow & X\\
x & \longmapsto & g\cdot x
\end{array}
\right. \label{bij}%
\end{equation}
is bijective. In particular, for any finite subset $Y\subset X$, we have
$\left\vert g\cdot Y\right\vert =\left\vert Y\right\vert ,$ that is the sets
$g\cdot Y$ and $Y$ have the same cardinality. It also shows that a group
action of $G$ on a set $X$ may be given by a group homomorphism from $G$ to
the group $\mathfrak{S}(X)$ of permutations of the set $X$. The action of $G$
on $X$ is said to be faithful when the corresponding homomorphism from $G$ to
$\mathfrak{S}(X)$ is injective.

For any subset $A\subset G$ and $Y\subset X$, define%
\[
A\cdot Y=\{a\cdot y\mid(a,y)\in A\times Y\}.
\]
In the sequel, we will study lower and upper bounds for the cardinality
$\left\vert A\cdot Y\right\vert $ when $A$ and $Y$ are supposed to be
finite.\ In the particular case $X=G$ and $G$ acts on itself by left
translation, we recover the classical problem in additive combinatorics of
determining lower and upper bounds for Minkowski products of finite subsets of
an ambient group.

It will also be interesting to replace the set $X$ by its linear analogue,
that is, to consider a representation $(\rho,V)$ of the group $G$ instead of
an action of $G$ on $X$. Recall that a representation $(\rho,V)$ is a group
homomorphism $\rho:G\rightarrow GL(V)$ where $V$ is a finite-dimensional
vector space over a given field $k$.\ This can essentially be thought as a
linear action of $G$ on the vector space $V$ and we will write $g\cdot v$ the
action of any element $g\in G$ on any vector $v\in V$. We thus have for any
$(\lambda_{1},\lambda_{2})\in k^{2}$ and any $(v_{1},v_{2})\in V^{2}$%
\[
g\cdot(\lambda_{1}v_{1}+\lambda_{2}v_{2})=\lambda_{1}(g\cdot v_{1}%
)+\lambda_{2}(g\cdot v_{2}).
\]
For any subset $Z$ in $V$, we denote by $\langle Z\rangle$ the $k$-subspace of
$V$ generated by the vectors in $Z$. We then write for short $\dim(Z)$ instead
of $\dim(\langle Z\rangle)$. Given any $k$-subspace $W$ of $V$ and any subset
$A$ of $G$, we will study the dimension $\dim(A\cdot W)$ of the set%
\[
A\cdot W=\langle a\cdot v\mid(a,v)\in A\times W\rangle.
\]
in terms of $\dim(W)$ and $\left\vert A\right\vert $.

\section{Extensions and limits of standard techniques}

\label{sec-classical}

This section is devoted to the continuation of Murphy's work ~\cite{Mur} on
some extensions of classical results in combinatorial group theory to the
action group setting. The notion of symmetric sets introduced by Murphy allows
us to generalise a theorem of Fre\u{\i}man~\cite{freiman}
(Subsection~\ref{subsection-murphy}). In Subsection~\ref{subsection-ruzsa}, we
show that the proof of Theorem 9.2 of~\cite{Ruz} can be extended to the group
action context. But we start the section with some obstructions: we show that
the most classical method in the study of a group action, namely the orbit
decomposition method, is not so powerful in our combinatorial context because
it requires much information on the stabilizers and the associated left
cosets. We also exhibit counterexamples to the direct generalisation of
Kneser's theorem in the group action setting.

\subsection{Orbit decomposition method for a group action}

\label{subsection-orbit}

In this paragraph, we will assume that the set $X$ is finite. Given an element
$x$ in $X$, we denote by $O_{x}=\{g\cdot x\mid g\in G\}$ its orbit
.\ Let us fix $x_{1},\ldots,x_{r}$ in $X$ so that
\[
X={\textstyle\bigsqcup\limits_{i=1}^{r}}O_{x_{i}}%
\]
is the disjoint union of the orbits $O_{x_{i}},i=1,\ldots,r$.\ It is classical
that for any $i=1,\ldots,r$ the map%
\[
\phi_{i}:\left\{
\begin{array}
[c]{c}%
G/G_{x_{i}}\rightarrow O_{x_{i}}\\
gG_{x_{i}}\longmapsto g\cdot x_{i}%
\end{array}
\right.
\]
is well-defined and bijective. Assume now that we have fixed a representative
$g[i]$ in each left coset $gG_{x_{i}}$ of $G/G_{x_{i}}.$ Also for any subset
$S$ in $G$, write $\left\vert S\right\vert _{i}$ for the cardinality of the
set of cosets $\phi_{i}(S)=\{gG_{x_{i}}\mid g\in S\}$ which is the same as the
cardinality of the set $S[i]=\{g[i]\mid\phi_{i}(g[i])\in\phi_{i}(S)\}$.\ 

For any subset $Y\subset X$, write $Y_{i}=Y\cap O_{x_{i}}$. Then, we have for
any subset $A\subset G$%
\[
A\cdot Y={\textstyle\bigsqcup\limits_{i=1}^{r}}
A\cdot Y_{i}.
\]
Finally, for any $i=1,\ldots,r$, we get by setting $B_{i}=\{g[i]\mid
gG_{x_{i}}\in\phi_{i}^{-1}(Y_{i})\}$ the equalities $\left\vert Y_{i}%
\right\vert =\left\vert B_{i}\right\vert ,$ $\left\vert A\cdot Y_{i}%
\right\vert =\left\vert AB_{i}\right\vert _{i}$ and
\[
\left\vert A\cdot Y\right\vert =\sum_{i=1}^{r}\left\vert AB_{i}\right\vert
_{i}.
\]
Therefore, the problem of studying the cardinality of $\left\vert A\cdot
Y\right\vert $ can be formally reduced to the problem of studying first each
product set $AB_{i}$ in the group $G$ and next the number $\left\vert
AB_{i}\right\vert _{i}$ of left cosets attained by the elements of $AB_{i}%
$.\ Since we have
\[
\frac{\left\vert AB_{i}\right\vert }{\left\vert G_{x_{i}}\right\vert }%
\leq\left\vert AB_{i}\right\vert _{i}\leq\left\vert AB_{i}\right\vert
,i=1,\ldots,r
\]
we get%
\[
\sum_{i=1}^{r}\frac{\left\vert AB_{i}\right\vert }{\left\vert G_{x_{i}%
}\right\vert }\leq\left\vert A\cdot Y\right\vert \leq\sum_{i=1}^{r}\left\vert
AB_{i}\right\vert
\]
which theoretically reduces the question to classical estimations of product
sets in groups which is largely addressed in the literature. In particular,
when the action is simply transitive (that is when there is only one orbit and
each stabilizer is trivial), both problems are equivalent. When the action is
free (each stabilizer is trivial) we just get%
\[
\left\vert A\cdot Y\right\vert =\sum_{i=1}^{r}\left\vert AB_{i}\right\vert
\]
so that the study of $\left\vert A\cdot Y\right\vert $ can be initialized by
determining the orbits of the action of $G$ on $X$. However, in the general
case, in addition to the orbit decomposition, this method requires much
information on the different stabilizers, their associated left cosets and the
maps $\phi_{i},i=1,\ldots,r$.\ This makes the results not so simple and
elegant as in the group setting. So other methods are needed. The last two
subsections of this section show how some standard methods of combinatorial group theory can
be adaptated. But, we first study generalisations of Kneser's theorem in the group action context.

\subsection{Counterexample to Kneser's theorem for group action}

In this subsection, we exhibit two counterexamples that show that Kneser's
theorem cannot be directly extended to the group action context.

\begin{exa}
\label{Example_CounterKneser}Assume $Y=\{1,\ldots,k\}$, $G=\mathfrak{S}_{n}$
and consider $n>\ell\geq k$.\ Let $A_{0}$ be the set of permutations
$\sigma\in\mathfrak{S}_{n}$ such that $\sigma(\{1,\ldots,k\})\subset
\{1,\ldots,\ell\}$. One easily checks that
\[
\left\vert A_{0}\right\vert =\frac{\ell!}{(\ell-k)!}(n-k)!.
\]
Now, observe that $A_{0}\cdot Y=\{1,\ldots,\ell\}$ and thus the stabilizer
$G_{A_{0}\cdot Y}$ of $A_{0}\cdot Y$ has cardinality
\[
\left\vert {G}_{A_{0}\cdot Y}\right\vert =\ell!\times(n-\ell)!.
\]
So we get
\[
\left\vert A_{0}\cdot Y\right\vert +\left\vert {G}_{A_{0}\cdot Y}\right\vert
\geq\left\vert Y\right\vert +\left\vert A_{0}\right\vert \Longleftrightarrow
\ell+\ell!\times(n-\ell)!\geq k+\frac{\ell!}{(\ell-k)!}(n-k)!
\]
Hence
\[
\left\vert A_{0}\cdot Y\right\vert +\left\vert {G}_{A_{0}\cdot Y}\right\vert
\geq\left\vert Y\right\vert +\left\vert A_{0}\right\vert \Longleftrightarrow
\frac{\ell-k}{\ell!(n-\ell)!}\geq\binom{n-k}{n-\ell}-1
\]

\noindent which can only hold when $\ell=k$ for otherwise%
\[
\frac{\ell-k}{\ell!(n-\ell)!}<1\text{ and } \binom{n-k}{n-\ell} -1 \geq1.
\]
In particular, when $\ell>k$, the inequality (\ref{Kneser}) does not hold.
Neither does the inequality $\left\vert A_{0}\cdot Y\right\vert +\left\vert
{G}_{A_{0}\cdot Y}\right\vert \geq\left\vert G_{A_{0} \cdot Y} Y\right\vert
+\left\vert G_{A_{0} \cdot Y} A_{0}\right\vert $ since $G_{A_{0} \cdot Y} Y =
\{1,\ldots, \ell\}$ and $G_{A_{0} \cdot Y} A_{0} = A_{0}$. When $k=\ell$, the
set $A_{0}$ is a group isomorphic to the direct product $\mathfrak{S}%
_{k}\times\mathfrak{S}_{n-k}$ and we get $A_{0}\cdot Y=Y$ with ${G}%
_{A_{0}\cdot Y}=A_{0}$ in which case (\ref{Kneser}) becomes an equality.
\end{exa}

\begin{exa}
There exists another version of Kneser's theorem saying that for any two non
empty subsets $A,B$ of an Abelian group $G$ verifying $|A+B|<|A|+|B|$, the
stabilizer of $A+B$ is non trivial. This version is also no longer true in the
group action context. Indeed, let us consider $G$ the group of affine
transformations of the line on $\mathbb{F}_{7}$: $G=\{x\in\mathbb{F}%
_{7}\mapsto ax+b\in\mathbb{F}_{7},a\in\mathbb{F}_{7}^{\times},b\in
\mathbb{F}_{7}\}$, set $Y=\{1,2\}\subset\mathbb{F}_{7}$ and $A=\{x\mapsto
x,x\mapsto5x+3\}\subset G$. We have $AY=\{1,2,6\}$. But an easy computation
shows that the stabilizer of $AY$ in $G$ is the identity map $x\mapsto x$ even
though $|AY|<|A|+|Y|$.
\end{exa}

\subsection{Symmetry sets and upper bounds}

\label{subsection-murphy}

In this subsection, we use the symmetry sets introduced by Murphy in
\cite{Mur} to obtain results analogous to results about sets of small
doubling. Assume that the group $G$ acts on the set $X$ and consider a finite
nonempty subset $Y$ of $X$. Following Murphy ideas, for a real number
$\alpha\in[0,1]$, we introduce the \emph{symmetry set} of $Y$ in $G$ for
$\alpha$ is defined as%
\[
\mathrm{Sym}_{\alpha}(Y)=\{g\in G\mid\left\vert g\cdot Y\cap Y\right\vert
\geq\alpha\left\vert Y\right\vert \}\text{.}%
\]
We also introduce the \emph{weak stabilizer} of $Y$ as%
\[
\Gamma_{Y}=\{g\in G\mid g\cdot Y\cap Y\neq\emptyset\}=%
{\displaystyle\bigcup\limits_{\alpha\in]0,1]}}
\mathrm{Sym}_{\alpha}(Y).
\]
One immediately checks that $1\in\Gamma_{Y}$ and $g\in\mathrm{Sym}_{\alpha
}(Y)$ if and only if $g^{-1}\in\mathrm{Sym}_{\alpha}(Y)$.\ Also, if $G$ acts
on itself by left translation and $A\subset G$, we have $\Gamma_{A}=AA^{-1}%
$.\ Observe also that $\mathrm{Sym}_{1}(Y)=G_{Y}$ is the stabilizer of $Y$ in
$G$. In general we always have $G_{Y}\subset\mathrm{Sym}_{\alpha}(Y)$ for any
$\alpha\in[0,1]$ and more generally $\mathrm{Sym}_{\alpha}(Y)\subset
\mathrm{Sym}_{\alpha^{\prime}}(Y)$ for $\alpha^{\prime}\leq\alpha$. Therefore
the set of subsets $(\mathrm{Sym}_{\alpha}(Y))_{\alpha\in\lbrack0,1]}$
decreases from $G$ to $G_{Y}$ when $\alpha$ increases in $[0,1] $. The set%
\[
\left\{  \frac{\left\vert g\cdot Y\cap Y\right\vert }{\left\vert Y\right\vert
}\mid g\in\Gamma_{Y}\right\}  \subset\mathbb{Q}_{>0}%
\]
is discrete and not empty. Thus it admits a minimum $\alpha_{0}$ and we have
$\Gamma_{Y}=\mathrm{Sym}_{\alpha_{0}}(Y)$.

When $(\rho,V)$ is a linear representation of $G$ such that $V\neq\{0\}$, we
define similarly for any $k$-subspace $W\neq\{0\}$ of $V$%
\begin{align*}
\mathrm{Sym}_{\alpha}(W)  &  =\{g\in G\mid\dim g\cdot W\cap W \geq\alpha\dim
W\}\text{ and }\\
\Gamma_{W}  &  =\{g\in G\mid g\cdot W\cap W\neq\{0\}\}.
\end{align*}

We also have $1\in\mathrm{Sym}_{\alpha}(W)$ and $g\in\mathrm{Sym}_{\alpha}(W)
$ if and only if $g^{-1}\in\mathrm{Sym}_{\alpha}(W)$.\ 

\bigskip

In this section, we examine what kind of information can be extracted when
some assumptions are imposed on the cardinality ratio $\frac{\left\vert A\cdot
Y\right\vert }{\left\vert Y\right\vert }$ (or the dimension ratio $\frac
{\dim(A\cdot Y)}{\dim(Y)}$). This problem was addressed in detail by Murphy in
\cite{Mur} for group action setting.\ Let us start by recalling Theorem 1 of
\cite{Mur} and state its linear version.

\begin{prop}
\begin{enumerate}
\item Assume that $\left\vert A\cdot Y\right\vert =\left\vert Y\right\vert $.
Then $H=\langle A^{-1}A\rangle$\footnote{Here $\langle A^{-1}A\rangle$ means
the subgroup of $G$ generated by $A^{-1}A $.} is a subgroup of $G_{Y}$ and $Y$
decomposes into $H$-orbits.

\item Assume that $\dim\langle A\cdot W\rangle=\dim W$. Then $H=\langle
A^{-1}A\rangle$ is a subgroup of $G_{W}$.\ When $k$ has characteristic zero
and $H$ is finite, the $k$-space $W$ decomposes into irreducible
representations for the group $H$.
\end{enumerate}
\end{prop}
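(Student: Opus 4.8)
The plan is to exploit the fact that each individual element of $A$ already produces a set (resp.\ subspace) of the largest possible size, so that the equality hypothesis forces every such set to coincide with $A\cdot Y$ (resp.\ $A\cdot W$); from there the containment $A^{-1}A\subset G_Y$ (resp.\ $A^{-1}A\subset G_W$) drops out immediately, and the remaining assertions are structural consequences.

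For the first part, I would fix any $a\in A$ and observe that $a\cdot Y\subset A\cdot Y$ while $\left\vert a\cdot Y\right\vert =\left\vert Y\right\vert $ by the bijectivity noted in~(\ref{bij}). Since by hypothesis $\left\vert A\cdot Y\right\vert =\left\vert Y\right\vert $ and all sets are finite, the inclusion $a\cdot Y\subset A\cdot Y$ between two sets of equal cardinality must be an equality; hence $a\cdot Y=A\cdot Y$ for every $a\in A$. Consequently, for any $a,a'\in A$ we have $a\cdot Y=a'\cdot Y$, that is $(a')^{-1}a\cdot Y=Y$, so that $(a')^{-1}a\in G_Y$. This shows $A^{-1}A\subset G_Y$, and since $G_Y$ is a subgroup of $G$ the generated subgroup $H=\langle A^{-1}A\rangle$ is contained in $G_Y$. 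Finally, because every $h\in H$ satisfies $h\cdot Y=Y$, the $H$-orbit of any $y\in Y$ lies inside $Y$, so $Y$ is a disjoint union of $H$-orbits.

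The second part follows the same scheme with cardinalities replaced by dimensions and unions by spans. Fixing $a\in A$, the subspace $a\cdot W$ is contained in $A\cdot W$ and has dimension $\dim W$ since $\rho(a)$ is invertible; the equality $\dim\langle A\cdot W\rangle=\dim W$ then forces $a\cdot W=A\cdot W$ for every $a\in A$. As before this yields $(a')^{-1}a\cdot W=W$ for all $a,a'\in A$, whence $A^{-1}A\subset G_W$ and $H=\langle A^{-1}A\rangle\subset G_W$. For the last assertion, $H\subset G_W$ means $W$ is stable under the action of $H$, so the restriction of $\rho$ makes $W$ a finite-dimensional representation of the finite group $H$; when $k$ has characteristic zero, Maschke's theorem guarantees this representation is semisimple, i.e.\ $W$ decomposes as a direct sum of irreducible $H$-subrepresentations.

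The arguments are short, and the only real subtlety is the passage from inclusion to equality, which relies crucially on finiteness (resp.\ finite-dimensionality) together with the invertibility of the action of each $a\in A$. I expect the genuinely delicate point to be the invocation of Maschke's theorem: the characteristic-zero assumption (more generally, that $\left\vert H\right\vert $ be invertible in $k$) and the finiteness of $H$ are exactly what is needed to ensure complete reducibility, and without them $W$ would only be guaranteed stable, not semisimple, under $H$.
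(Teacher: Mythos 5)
Your proof is correct and follows essentially the same route as the paper's: both upgrade the inclusion of equal-sized finite sets (resp.\ equal-dimensional subspaces) to an equality, deduce $A^{-1}A\subset G_Y$ (resp.\ $G_W$), and invoke semisimplicity in characteristic zero for the last assertion. The only cosmetic difference is that you compare each translate $a\cdot Y$ with $A\cdot Y$, while the paper compares $a^{-1}A\cdot Y$ with $Y$; the two formulations are equivalent.
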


\begin{proof}
1: For any $a\in A$, we have $1\in a^{-1}A$ and $a^{-1}A\cdot Y=Y$ because
$Y\subset a^{-1}A\cdot Y$ and $\left\vert a^{-1}A\cdot Y\right\vert
=\left\vert A\cdot Y\right\vert =\left\vert Y\right\vert $.\ This shows that
$A^{-1}A\cdot Y=Y$ and thus the desired inclusion $\langle A^{-1}%
A\rangle\subset G_{Y}$. Since $H$ is a subgroup of $G_{Y}$, it acts on $Y$
which yields the decomposition in $H$-orbits.
2: We get similarly $A^{-1}A\cdot W=W$ and the decomposition of $W$ in
irreducible representations for the finite group $H$ follows from the
semisimplicity of its representation theory in characteristic zero.
\end{proof}

\subsubsection{Small growing sets}

In his article \cite{Mur}, Murphy extends Ruzsa's triangle inequality, Ruzsa's
covering lemma and Balog-Szemer\'{e}di-Gowers theorem to the context of group
actions. Here, we extend results on small growing subsets: we examine cases
where the hypotheses of the previous proposition are relaxed. In the following
$\alpha$ is a fixed real number in $]0,1]$.

\begin{lem}
\label{Lemma_Sym}

\begin{enumerate}
\item Assume that $A\subset G$ and $Y\subset X$ are finite and nonempty and
satisfy $\left\vert A\cdot Y\right\vert \leq(2-\alpha)\left\vert Y\right\vert
$. Then $A^{-1}A\subset\mathrm{Sym}_{\alpha}(Y)$.

\item Assume $A\subset G$ and $W$ is a finite-dimensional $k$-subspace of $V$
such that $\dim\langle A\cdot W\rangle\leq(2-\alpha)\dim W$. Then
$A^{-1}A\subset\mathrm{Sym}_{\alpha}(W)$.
\end{enumerate}
\end{lem}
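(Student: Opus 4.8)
The plan is to prove the two statements in parallel, since the linear case is the formal analogue of the set case with cardinality replaced by dimension. Fix $\alpha\in{}]0,1]$. For part 1, I would start by taking an arbitrary element $g\in A^{-1}A$ and writing it as $g=a^{-1}b$ with $a,b\in A$. The goal is to show $g\in\mathrm{Sym}_\alpha(Y)$, i.e. $\left\vert g\cdot Y\cap Y\right\vert\geq\alpha\left\vert Y\right\vert$. The key observation is that both $a\cdot Y$ and $b\cdot Y$ are subsets of $A\cdot Y$, and each has cardinality exactly $\left\vert Y\right\vert$ by the bijectivity noted in~(\ref{bij}).

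First I would apply inclusion–exclusion to $a\cdot Y$ and $b\cdot Y$ inside $A\cdot Y$:
\[
\left\vert a\cdot Y\cap b\cdot Y\right\vert=\left\vert a\cdot Y\right\vert+\left\vert b\cdot Y\right\vert-\left\vert a\cdot Y\cup b\cdot Y\right\vert\geq 2\left\vert Y\right\vert-\left\vert A\cdot Y\right\vert.
\]
Using the hypothesis $\left\vert A\cdot Y\right\vert\leq(2-\alpha)\left\vert Y\right\vert$, the right-hand side is at least $2\left\vert Y\right\vert-(2-\alpha)\left\vert Y\right\vert=\alpha\left\vert Y\right\vert$. Next I would translate this overlap back to $g$ by applying the bijection $x\mapsto a^{-1}\cdot x$, which preserves cardinalities and sends $a\cdot Y\cap b\cdot Y$ to $Y\cap a^{-1}b\cdot Y=Y\cap g\cdot Y$. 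Hence $\left\vert g\cdot Y\cap Y\right\vert=\left\vert a\cdot Y\cap b\cdot Y\right\vert\geq\alpha\left\vert Y\right\vert$, which is exactly $g\in\mathrm{Sym}_\alpha(Y)$.

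For part 2 the argument is identical with subspaces: I take $g=a^{-1}b\in A^{-1}A$, note $a\cdot W$ and $b\cdot W$ are subspaces of $\langle A\cdot W\rangle$ of dimension $\dim W$ each (the action of $g$ is a linear isomorphism), and apply the Grassmann dimension formula $\dim(a\cdot W\cap b\cdot W)=\dim(a\cdot W)+\dim(b\cdot W)-\dim(a\cdot W+b\cdot W)\geq 2\dim W-\dim\langle A\cdot W\rangle$. The hypothesis bounds this below by $\alpha\dim W$, and applying the linear isomorphism $v\mapsto a^{-1}\cdot v$ identifies $a\cdot W\cap b\cdot W$ with $W\cap g\cdot W$, giving $g\in\mathrm{Sym}_\alpha(W)$.

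I do not anticipate a genuine obstacle here; the only point requiring a little care is the cardinality-preserving (respectively dimension-preserving) transport of the intersection under the bijection $x\mapsto a^{-1}\cdot x$, which must be justified by the fact that this map sends $b\cdot Y$ to $g\cdot Y$ and fixes the image of $Y$ correctly—so one should verify that $a^{-1}\cdot(a\cdot Y\cap b\cdot Y)=Y\cap g\cdot Y$ rather than merely that the cardinalities match by~(\ref{bij}). Once this set-theoretic (respectively linear) identity is recorded, both inequalities follow immediately from inclusion–exclusion and the hypothesis.
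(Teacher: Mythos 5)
Your proposal is correct and follows essentially the same route as the paper: bound $\left\vert (a\cdot Y)\cap(b\cdot Y)\right\vert$ from below by inclusion--exclusion inside $A\cdot Y$ using the hypothesis, then transport the intersection by the bijection induced by a group element to conclude $A^{-1}A\subset\mathrm{Sym}_{\alpha}(Y)$, with the Grassmann formula playing the same role in the linear case. The only (immaterial) difference is that you translate by $a^{-1}$ to land on $a^{-1}b$ whereas the paper translates by $b^{-1}$ to land on $b^{-1}a$.
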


\begin{proof}
1: Consider $a,b$ in $A$.\ Since we have $\left\vert a\cdot Y\right\vert
=\left\vert b\cdot Y\right\vert =\left\vert Y\right\vert ,$ $a\cdot Y\subset
A\cdot Y$, $b\cdot Y\subset A\cdot Y$ and $\left\vert A\cdot Y\right\vert
\leq(2-\alpha)\left\vert Y\right\vert ,$ we must have $\left\vert (a\cdot
Y)\cap(b\cdot Y)\right\vert \geq\alpha\left\vert Y\right\vert $.\ We thus
obtain $\left\vert (b^{-1}a\cdot Y)\cap Y\right\vert \geq\alpha\left\vert
Y\right\vert $ and the desired inclusion $A^{-1}A\subset\mathrm{Sym}_{\alpha
}(Y)$.
2: This works similarly using Grassmann formula.
\end{proof}

\bigskip

Given a subset $S$ of $G$, we denote by $\langle S\rangle$ the subgroup of $G
$ generated by the elements in $S$. The next proposition extends a standard
result of Fre\u{\i}man~\cite{freiman}.

\begin{prop}
\ \label{Prop_3/2}

\begin{enumerate}
\item Assume that $A\subset G$ and $Y\subset X$ are nonempty, $Y$ is finite
and $A$ and $Y$ satisfy $\left\vert A^{-1}\cdot Y\right\vert \leq
\frac{3-\alpha}{2}\left\vert Y\right\vert $. Then $(AA^{-1})^{2}$ and
$AA^{-1}$ are contained in $\mathrm{Sym}_{\alpha}(Y)$.

\item Assume that $A\subset G$ is non empty and $W$ is a nonzero finite
dimensionnal $k$-subspace of $V$ such that $\dim\langle A^{-1}\cdot
W\rangle\leq\frac{3-\alpha}{2}\dim W$. Then $(AA^{-1})^{2}$ is contained in
$\mathrm{Sym}_{\alpha}(W).$
\end{enumerate}
\end{prop}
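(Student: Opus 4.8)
The plan is to run the classical Fre\u{\i}man $3/2$ argument in two stages, adapting inclusion--exclusion (for the action statement) and Grassmann's formula (for the representation statement), exactly in the spirit of Lemma~\ref{Lemma_Sym}. Throughout I set $\beta=\frac{1+\alpha}{2}$ and record that $\beta\geq\alpha$ because $\alpha\leq 1$; hence $\mathrm{Sym}_{\beta}(Y)\subset\mathrm{Sym}_{\alpha}(Y)$ by the monotonicity property $\mathrm{Sym}_{\alpha}(Y)\subset\mathrm{Sym}_{\alpha'}(Y)$ for $\alpha'\leq\alpha$ recorded above. I will prove the sharper inclusions $AA^{-1}\subset\mathrm{Sym}_{\beta}(Y)$ and $(AA^{-1})^{2}\subset\mathrm{Sym}_{\alpha}(Y)$, from which the statement follows at once.

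First I would show $AA^{-1}\subset\mathrm{Sym}_{\beta}(Y)$. Fix $a,a'\in A$ and put $b=a^{-1}$ and $b'=(a')^{-1}$, so $b,b'\in A^{-1}$. The sets $b\cdot Y$ and $b'\cdot Y$ are contained in $A^{-1}\cdot Y$ and each has cardinality $|Y|$ by~(\ref{bij}). Since $|A^{-1}\cdot Y|\leq\frac{3-\alpha}{2}|Y|$, inclusion--exclusion gives $|b\cdot Y\cap b'\cdot Y|\geq 2|Y|-\frac{3-\alpha}{2}|Y|=\beta|Y|$. Applying the bijection $x\mapsto (b')^{-1}\cdot x$ (which commutes with intersection) turns this into $|((b')^{-1}b)\cdot Y\cap Y|\geq\beta|Y|$, and as $(b')^{-1}b=a'a^{-1}$ runs over all of $AA^{-1}$ this is precisely $AA^{-1}\subset\mathrm{Sym}_{\beta}(Y)$.

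The second and decisive stage is a bootstrap: given $s,t\in AA^{-1}$ I must bound $|st\cdot Y\cap Y|$ from below, and the idea is to work inside the intermediate set $s\cdot Y$. From $t\in\mathrm{Sym}_{\beta}(Y)$ and the injectivity of the action of $s$ I get $|st\cdot Y\cap s\cdot Y|=|s\cdot(t\cdot Y\cap Y)|\geq\beta|Y|$, while $s\in\mathrm{Sym}_{\beta}(Y)$ gives $|s\cdot Y\cap Y|\geq\beta|Y|$. Now the two sets $P=st\cdot Y\cap s\cdot Y$ and $Q=Y\cap s\cdot Y$ are both subsets of $s\cdot Y$, which has cardinality $|Y|$; a second inclusion--exclusion, this time inside $s\cdot Y$, yields $|P\cap Q|\geq 2\beta|Y|-|Y|=\alpha|Y|$, where the last equality is exactly $2\beta-1=\alpha$. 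Since $P\cap Q\subset st\cdot Y\cap Y$, I conclude $st\in\mathrm{Sym}_{\alpha}(Y)$, hence $(AA^{-1})^{2}\subset\mathrm{Sym}_{\alpha}(Y)$.

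The representation statement is proved by the same two stages with $|\cdot|$ replaced by $\dim$, intersections of sets by intersections of subspaces, and the two inclusion--exclusion steps by Grassmann's formula; the only extra point to verify is that each linear isomorphism $g\cdot$ commutes with sums and intersections of subspaces, which is what legitimises the identity $\dim(s\cdot(t\cdot W\cap W))=\dim(t\cdot W\cap W)$. I expect the main obstacle to be the bootstrap stage: one has to notice that the three sets $st\cdot Y$, $s\cdot Y$, $Y$ form a chain whose consecutive members overlap in at least $\beta|Y|$ points, and that the correct way to exploit this is to intersect the two large subsets $P,Q$ of the \emph{middle} set $s\cdot Y$ rather than to chain the two bounds directly. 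The constant $\frac{3-\alpha}{2}$ in the hypothesis is tuned precisely so that the first stage delivers the factor $\beta=\frac{1+\alpha}{2}$ and the second stage then closes with $2\beta-1=\alpha$.
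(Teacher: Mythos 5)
Your proof is correct and is essentially the paper's own argument: both establish the intermediate bound $\left\vert u\cdot Y\cap Y\right\vert\geq\frac{1+\alpha}{2}\left\vert Y\right\vert$ for every $u\in AA^{-1}$ by inclusion--exclusion inside $A^{-1}\cdot Y$, then combine two such bounds by a second inclusion--exclusion to get $\alpha\left\vert Y\right\vert$ for a product of two elements of $AA^{-1}$ (with the same Grassmann-formula adaptation in the linear case). Your bootstrap inside $s\cdot Y$ is just the paper's intersection inside $Y$ translated by $s$, so the two computations coincide.
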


\begin{proof}
1: Consider $u=ab^{-1}$ in $AA^{-1}$ with $a,b$ in $A$. We have
\begin{multline*}
\left\vert (a^{-1}\cdot Y)\cap(b^{-1}\cdot Y)\right\vert =\left\vert
a^{-1}\cdot Y\right\vert +\left\vert b^{-1}\cdot Y\right\vert -\left\vert
(a^{-1}\cdot Y)\cup(b^{-1}\cdot Y)\right\vert \geq\\
2\left\vert Y\right\vert -\left\vert A^{-1}\cdot Y\right\vert \geq
\frac{1+\alpha}{2}\left\vert Y\right\vert
\end{multline*}
where the second inequality follows from the inclusions $a^{-1}\cdot Y\subset
A^{-1}\cdot Y$ and $b^{-1}\cdot Y\subset A^{-1}\cdot Y$ together with the
hypothesis $\left\vert A^{-1}\cdot Y\right\vert \leq\frac{3-\alpha}%
{2}\left\vert Y\right\vert $. We thus get%
\[
\left\vert Y\cap u\cdot Y\right\vert \geq\frac{1+\alpha}{2}\left\vert
Y\right\vert .
\]
For any $v\in AA^{-1}$, we get similarly%
\[
\left\vert v^{-1}\cdot Y\cap Y\right\vert =\left\vert Y\cap v\cdot
Y\right\vert \geq\frac{1+\alpha}{2}\left\vert Y\right\vert .
\]
This implies that both sets $Y\cap u\cdot Y$ and $v^{-1}\cdot Y\cap Y$
intersect non trivially in $Y$ and
\[
\left\vert u\cdot Y\cap v^{-1}\cdot Y\cap Y\right\vert \geq\left\vert Y\cap
u\cdot Y\right\vert +\left\vert v^{-1}\cdot Y\cap Y\right\vert -\left\vert
Y\right\vert \geq\alpha\left\vert Y\right\vert .
\]
Therefore we obtain that $\left\vert vu\cdot Y\cap Y\right\vert \geq
\alpha\left\vert Y\right\vert $ and the product $vu$ of any two elements $u,v$
in $AA^{-1}$ belongs to $\mathrm{Sym}_{\alpha}(Y)$. In particular, by taking
$v=1\in AA^{-1}$, we get that $AA^{-1}$ is contained in $\mathrm{Sym}_{\alpha
}(Y)$.
2: The proof can be easily adapted to the context of a the linear
representation $V$ of $G$.
\end{proof}

\bigskip

\begin{rem}
\ 

\begin{enumerate}
\item When $G$ acts on itself by left translation and $Y=A$, we have
$\Gamma_{A}=AA^{-1}$ and the hypothesis $\left\vert A^{-1}\cdot Y\right\vert
<\frac{3}{2}\left\vert Y\right\vert $ implies that $\langle AA^{-1}\rangle
_{G}\subset AA^{-1}$, that is $AA^{-1}$ is itself a subgroup of $G$. Indeed,
for some $\alpha$, $(AA^{-1})^{2} \subset\mathrm{Sym}_{\alpha}(Y)
\subset\Gamma_{A} = AA^{-1}$.

\item If we assume $\left\vert A\cdot Y\right\vert <\frac{3}{2}\left\vert
Y\right\vert $, we get similarly that $(A^{-1}A)^{2}$ is contained $\Gamma
_{Y}$.
\end{enumerate}
\end{rem}

Assertion 1 of the previous remark suggests the following corollary of
Proposition~\ref{Prop_3/2}.

\begin{cor}
Assume that $A\subset G$ and $Y\subset X$ are nonempty with $Y$ a finite set
and that there exists $\alpha\in]0,1[$ such that%
\[
\mathrm{Sym}_{\alpha}(Y)\subset AA^{-1}\text{ and }\left\vert A^{-1}\cdot
Y\right\vert \leq\frac{3-\alpha}{2}\left\vert Y\right\vert .
\]
Then $AA^{-1}$ is a subgroup of $G$.
\end{cor}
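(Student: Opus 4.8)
The plan is to combine Proposition~\ref{Prop_3/2} with the hypothesis $\mathrm{Sym}_{\alpha}(Y)\subset AA^{-1}$ to force $AA^{-1}$ to absorb its own products, i.e.\ to show $(AA^{-1})^{2}\subset AA^{-1}$, which is exactly the condition needed to conclude that $AA^{-1}$ is a subgroup. First I would invoke Proposition~\ref{Prop_3/2}, part~1: since $A$ and $Y$ are nonempty, $Y$ is finite, and $\left\vert A^{-1}\cdot Y\right\vert\leq\frac{3-\alpha}{2}\left\vert Y\right\vert$, we obtain $(AA^{-1})^{2}\subset\mathrm{Sym}_{\alpha}(Y)$. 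Chaining this with the assumed inclusion $\mathrm{Sym}_{\alpha}(Y)\subset AA^{-1}$ gives immediately
\[
(AA^{-1})^{2}\subset\mathrm{Sym}_{\alpha}(Y)\subset AA^{-1}.
\]

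Next I would verify that a symmetric subset $S=AA^{-1}$ containing $1$ and satisfying $S^{2}\subset S$ is a subgroup. Symmetry holds because $(AA^{-1})^{-1}=AA^{-1}$, and $1\in AA^{-1}$ since $A$ is nonempty (pick $a\in A$, then $1=aa^{-1}$). The containment $S^{2}\subset S$ says $S$ is closed under the group multiplication; together with $1\in S$ this makes $S$ a submonoid, and symmetry then supplies inverses, so $S$ is a subgroup. One small point worth spelling out is that finiteness is not needed for this last deduction: closure under products plus closure under inverses (and nonemptiness) already gives a subgroup, so the argument is purely formal once the key inclusion is in hand.

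The only genuine content is therefore the application of Proposition~\ref{Prop_3/2}, and the role of the extra hypothesis $\mathrm{Sym}_{\alpha}(Y)\subset AA^{-1}$ is precisely to turn the \emph{upper} bound $(AA^{-1})^{2}\subset\mathrm{Sym}_{\alpha}(Y)$ on the square into a \emph{self}-containment. I do not anticipate a serious obstacle here: the corollary is essentially an abstract packaging of the proposition, and the remark preceding it (assertion~1, treating the self-translation case $Y=A$, where $\mathrm{Sym}_{\alpha}(Y)\subset\Gamma_{A}=AA^{-1}$ holds automatically) already signals the intended mechanism. The main thing to be careful about is simply to state clearly why $\alpha\in]0,1[$ rather than $\alpha=1$ is the relevant range, so that the strict inequality structure underlying $\mathrm{Sym}_{\alpha}$ is consistent with Proposition~\ref{Prop_3/2}; but this is a matter of bookkeeping rather than a real difficulty.
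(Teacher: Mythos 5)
Your proposal is correct and follows exactly the paper's own proof: apply Proposition~\ref{Prop_3/2} to get $(AA^{-1})^{2}\subset\mathrm{Sym}_{\alpha}(Y)\subset AA^{-1}$, and conclude that $AA^{-1}$ (symmetric, containing $1$, closed under products) is a subgroup. The only difference is that you spell out the last elementary step, which the paper leaves implicit.
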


\begin{proof}
By Proposition \ref{Prop_3/2}, we get $(AA^{-1})^{2}\subset\mathrm{Sym}%
_{\alpha}(Y)\subset AA^{-1}$. Therefore, $AA^{-1}$ is a subgroup of $G$.
\end{proof}

\subsection{Action of a product subset of $G$ on a subset of $X$}

\label{subsection-ruzsa}

Assume that $G$ acts on the set $X$. We now address the question of
determining an upper bound of $AB\cdot Y$ when $A,B$ are nonempty finite
subsets of $G$ and $Y$ a finite subset of $X$. This is a group action version
of Theorem~9.2 of \cite{Ruz}.

\begin{thm}
\label{Th_triple}With the previous notation we have%
\begin{equation}
\left\vert AB\cdot Y\right\vert ^{2}\leq\left\vert AB\right\vert \left\vert
B\cdot Y\right\vert \max_{b\in B}\{\left\vert Ab\cdot Y\right\vert
\}.\label{triple}%
\end{equation}
In particular, when the elements of $A$ commute with those of $B$ we have%
\[
\left\vert AB\cdot Y\right\vert ^{2}\leq\left\vert AB\right\vert \left\vert
B\cdot Y\right\vert \left\vert A\cdot Y\right\vert .
\]

\end{thm}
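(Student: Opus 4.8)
The plan is to reduce the stated inequality to three "pivoted" Ruzsa--Plünnecke-type estimates together with the trivial bound $|AB\cdot Y|\le|A|\,|B|\,|Y|$, exactly mimicking Ruzsa's derivation of Theorem~9.2 from his product inequalities. Writing $P=|AB\cdot Y|$ and $M=\max_{b\in B}|Ab\cdot Y|$, I would aim at the three inequalities
$$|A|\,P\le|AB|\,M,\qquad |B|\,P\le|AB|\,|B\cdot Y|,\qquad |Y|\,P\le|B\cdot Y|\,M,$$
obtained by pivoting respectively on $A$, on $B$ and on $Y$. These are the action analogues of $|A+B+C|\,|A|\le|A+B|\,|A+C|$ and its two companions; the one-sidedness of the action is what replaces the symmetric set ``$A\cdot Y$'' by the $b$-dependent sets $Ab\cdot Y$, which I then control uniformly by their maximum $M$. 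Multiplying the three inequalities gives $|A|\,|B|\,|Y|\,P^{3}\le|AB|^{2}\,|B\cdot Y|^{2}\,M^{2}$, and feeding in $P\le|A|\,|B|\,|Y|$ yields $P^{4}\le|AB|^{2}|B\cdot Y|^{2}M^{2}$, i.e. the desired $P^{2}\le|AB|\,|B\cdot Y|\,M$ after taking square roots.

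For the three pivoted estimates I would follow the same route as in the group case, adapting the Ruzsa triangle inequality and the Ruzsa covering lemma in the form already extended to group actions by Murphy, or alternatively a Petridis-style argument choosing a subset of $A$ (resp. $B$, resp. $Y$) minimising the relevant ratio $|A'B\cdot Y|/|A'|$. The template is: bound $|AB\cdot Y|$ by first covering $AB\cdot Y$ efficiently by pieces coming from the pivot set, and then applying the triangle inequality to propagate the resulting small-growth information through the composition $A\cdot(B\cdot Y)$. Each time, the quantity playing the role of ``$A+C$'' is the mixed set $Ab\cdot Y$, and because only a left action is available one cannot symmetrise; this is exactly where the maximum over $b\in B$ is forced and where the argument departs from the classical one.

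The commutative specialisation is then immediate: if every element of $A$ commutes with every element of $B$, then for each $b$ one has $Ab\cdot Y=bA\cdot Y=b\cdot(A\cdot Y)$, so $|Ab\cdot Y|=|A\cdot Y|$ and $M=|A\cdot Y|$, giving $|AB\cdot Y|^{2}\le|AB|\,|B\cdot Y|\,|A\cdot Y|$. The main obstacle I anticipate is the genuinely non-commutative content of the pivoted inequalities: Plünnecke--Ruzsa estimates are notoriously fragile outside the abelian setting, so the delicate point is to verify that Ruzsa's covering/triangle machinery still closes with only a one-sided action, and that the correct replacement for the ambiguous set $A\cdot Y$ is uniformly the maximal $Ab\cdot Y$. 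Checking that this maximum is the right quantity in all three pivots, and that the product of the three bounds telescopes exactly with no loss, is where I would concentrate the verification.
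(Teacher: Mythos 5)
Your reduction arithmetic is fine: if the three pivoted inequalities held, multiplying them and inserting $|AB\cdot Y|\le|A|\,|B|\,|Y|$ would indeed give the theorem, and your treatment of the commuting case is correct. The gap is that the three pivoted inequalities are false, and not for the reason you anticipate (the one-sidedness of the action): they already fail for an Abelian group acting on itself by translation, i.e.\ in the classical sumset setting. There your pivot on $A$ reads $|A|\,|A+B+Y|\le|A+B|\,|A+Y|$, which for $Y=B$ is Pl\"unnecke's inequality \emph{without passing to a subset}, $|A|\,|A+2B|\le|A+B|^{2}$, and that is known to fail. Explicitly, in $G=X=\mathbb{F}_{2}^{5}$ take $P=\langle e_{1},e_{2},e_{3},e_{4}\rangle$, $A=P\cup\{e_{5}\}$, $B=Y=\{0,e_{1},e_{2},e_{3},e_{4}\}$. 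Then $|A|=17$, $|A+B|=|P\cup(e_{5}+B)|=21$, $|2B|=11$, $|A+2B|=|P\cup(e_{5}+2B)|=27$, and $M=\max_{b\in B}|A+b+Y|=|A+Y|=21$, so your first pivot asserts $17\cdot27\le21\cdot21$, i.e.\ $459\le441$, which is false; relabelling the same example ($Y=P\cup\{e_{5}\}$, $A=B=\{0,e_{1},\dots,e_{4}\}$, resp.\ $B=P\cup\{e_{5}\}$, $A=Y=\{0,e_{1},\dots,e_{4}\}$) kills the pivots on $Y$ and on $B$ as well. Meanwhile the conclusion of Theorem~\ref{Th_triple} does hold in this example ($27^{2}=729\le21\cdot11\cdot21=4851$), so it is your intermediate step, not the statement, that breaks. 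The genuine Ruzsa triangle inequalities, which carry only a two-fold product such as $|B\cdot Y|$ on their left-hand side, are true but too weak to close your multiplication argument.

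This failure is exactly the phenomenon that forces Pl\"unnecke--Ruzsa--Petridis estimates to pass to subsets: a Petridis-type argument (the paper's Theorem~\ref{Th_Tao} is its group-action version) yields $|CB'\cdot Y|\le\alpha|CB'|$ only for a suitable nonempty \emph{subset} $B'$ of the minimising set, and the subset cannot in general be removed. Since the statement to be proved concerns the full sets $A,B,Y$, replacing each pivot by its subset version destroys the telescoping product, so the plan cannot be repaired along these lines. The paper instead follows Ruzsa's inductive proof of $|A+B+C|^{2}\le|A+B|\,|B+C|\,|A+C|$ (Theorem 9.2 of \cite{Ruz}): induct on $|B|$, remove the element $b\in B$ realising the maximum $m=\max_{u\in B}|Au\cdot Y|$, write $AB=AB'\sqcup\{ab:a\in A^{\flat}\}$ and $B\cdot Y=(B'\cdot Y)\sqcup\{b\cdot y:y\in Y^{\flat}\}$ for suitable subsets $A^{\flat}\subset A$ and $Y^{\flat}\subset Y$, and control the new part of $AB\cdot Y$ by a set $X\subset A^{\flat}\times Y^{\flat}$ satisfying both $|X|\le m$ and $|X|\le|A^{\flat}|\,|Y^{\flat}|$; the induction then closes with the AM--GM inequality. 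That elementary bookkeeping, in which the maximum $m$ arises naturally, is the ingredient your proposal is missing.
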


\begin{proof}
We proceed by induction on $\left\vert B\right\vert $. When $B=\{b\}$, we
obtain
\[
\left\vert Ab\cdot Y\right\vert ^{2}\leq\left\vert Ab\right\vert \left\vert
b\cdot Y\right\vert \max_{b\in B}\{\left\vert Ab\cdot Y\right\vert \}
\]
by observing that $\left\vert Ab\cdot Y\right\vert \leq\left\vert
Ab\right\vert \left\vert Y\right\vert $ and $\left\vert b\cdot Y\right\vert
=\left\vert Y\right\vert $.\ Now assume $\left\vert B\right\vert >1$, set
$m=\max_{u\in B}\{\left\vert Au\cdot Y\right\vert \}$ and fix $b\in B$ such
that $m=\left\vert Ab\cdot Y\right\vert $.\ Write $B=B^{\prime}\cup
\{b\}$.\ Set $A=\{a_{1},\ldots,a_{r}\}$ and $Y=\{y_{1},\ldots,y_{s}\}$. We
have $AB=AB^{\prime}\cup Ab$. There exists a subset $A^{\flat}$ of $A$ such
that
\[
AB=AB^{\prime}%
{\textstyle\bigsqcup\limits_{a\in A^{\flat}}}
ab.
\]
Similarly, there exists a subset $Y^{\flat}$ of $Y$ such that
\[
B\cdot Y=(B^{\prime}\cdot Y)%
{\textstyle\bigsqcup\limits_{y\in Y^{\flat}}}
b\cdot y.
\]
We get%
\begin{multline*}
AB\cdot Y=(AB^{\prime}\cdot Y)%
{\textstyle\bigcup\limits_{a\in A^{\flat}}}
(ab\cdot Y)=(AB^{\prime}\cdot Y)%
{\textstyle\bigcup\limits_{a\in A^{\flat}}}
(aB\cdot Y)=
\\ (AB^{\prime}\cdot Y)\!%
{\textstyle\bigcup\limits_{a\in A^{\flat}}}
\! (aB^{\prime}\cdot Y)%
{\textstyle\bigcup\limits_{a\in A^{\flat}}}
{\textstyle\bigcup\limits_{y\in Y^{\flat}}}
(ab\cdot y).
\end{multline*}
Since we have $%
{\textstyle\bigcup\limits_{a\in A^{\flat}}}
(aB^{\prime}\cdot Y)\subset AB^{\prime}\cdot Y$, we can write%
\[
AB\cdot Y=(AB^{\prime}\cdot Y)%
{\textstyle\bigcup\limits_{a\in A^{\flat}}}
{\textstyle\bigcup\limits_{y\in Y^{\flat}}}
(ab\cdot y).
\]
By the previous decomposition, there exists $X\subset A^{\flat}\times
Y^{\flat}$ such that%
\[
AB\cdot Y=(AB^{\prime}\cdot Y)%
{\textstyle\bigsqcup\limits_{(a,y)\in X}}
(ab\cdot y).
\]
Set $\alpha=\left\vert X\right\vert $, $\beta=\left\vert A^{\flat}\right\vert
$ and $\gamma=\left\vert Y^{\flat}\right\vert $.\ Since $\left\vert AB\cdot
Y\right\vert =\left\vert AB^{\prime}\cdot Y\right\vert +\alpha$, the desired
inequality (\ref{triple}) is equivalent to
\begin{equation}
(\left\vert AB^{\prime}\cdot Y\right\vert +\alpha)^{2}\leq(\left\vert
AB^{\prime}\right\vert +\beta)(\left\vert B^{\prime}\cdot Y\right\vert
+\gamma)m. \label{tripleref}%
\end{equation}
By the induction hypothesis, we have
\begin{equation}
\left\vert AB^{\prime}\cdot Y\right\vert ^{2}\leq\left\vert AB^{\prime
}\right\vert \left\vert B^{\prime}\cdot Y\right\vert m. \label{triplerec}%
\end{equation}
because $\max_{u\in B^{\prime}}\{\left\vert Au\cdot Y\right\vert )\}\leq
\max_{u\in B}\{\left\vert Au\cdot Y\right\vert )\}=m$.
Moreover we have $%
\!\!\!{\textstyle\bigsqcup\limits_{(a,y)\in X}}
(ab\cdot y)\subset Ab\cdot Y$ and therefore $\alpha\leq m$. Since $X\subset
A^{\flat}\times Y^{\flat}$, we have also $\alpha\leq\beta\gamma$. Hence
$\alpha^{2}\leq m\beta\gamma.$ By multiplying with (\ref{triplerec}), this
gives
\[
\alpha^{2}\left\vert AB^{\prime}\cdot Y\right\vert ^{2}\leq\left\vert
AB^{\prime}\right\vert \left\vert B^{\prime}\cdot Y\right\vert m^{2}%
\beta\gamma.
\]
Therefore%
\[
\alpha\left\vert AB^{\prime}\cdot Y\right\vert \leq m\sqrt{\gamma\left\vert
AB^{\prime}\right\vert \times\beta\left\vert B^{\prime}\cdot Y\right\vert
}\leq m\frac{\gamma\left\vert AB^{\prime}\right\vert +\beta\left\vert
B^{\prime}\cdot Y\right\vert }{2}.
\]
So
\[
2\alpha\left\vert AB^{\prime}\cdot Y\right\vert \leq m\gamma\left\vert
AB^{\prime}\right\vert +m\beta\left\vert B^{\prime}\cdot Y\right\vert .
\]
Combining this last inequality with $\alpha^{2}\leq m\beta\gamma$ and
(\ref{triplerec}), we finally get%
\begin{multline*}
(\left\vert AB^{\prime}\cdot Y\right\vert +\alpha)^{2}=\left\vert AB^{\prime
}\cdot Y\right\vert ^{2}+2\alpha\left\vert AB^{\prime}\cdot Y\right\vert
+\alpha^{2}\leq\\
m\left\vert AB^{\prime}\right\vert \left\vert B^{\prime}\cdot Y\right\vert
+m\gamma\left\vert AB^{\prime}\right\vert +m\beta\left\vert B^{\prime}\cdot
Y\right\vert +m\beta\gamma=(\left\vert AB^{\prime}\right\vert +\beta
)(\left\vert B^{\prime}\cdot Y\right\vert +\gamma)m
\end{multline*}
as desired.
\end{proof}

\section{Submodular functions}

\label{sec-submodular}

The goal of this section is to show how techniques based on submodular
functions are efficient methods to obtain results in the group action setting.

\subsection{Background}

Consider a set $S$ (in the sequel $S$ could be a group $G$ or the set $X$ on
which $G$ acts).\ Let $\mathcal{P}(S)$ be the power set of $S$.

\begin{definition}
The map $f:\mathcal{P}(S)\rightarrow\mathbb{R}$ is said to be submodular when
\begin{equation}
f(A\cap B)+f(A\cup B)\leq f(A)+f(B)\label{Ineg}%
\end{equation}
for any subsets $A$ and $B$ in $\mathcal{P}(S)$.

The submodular function $f$ is said increasing when $f(A)\leq f(B)$ for any
subsets $A\subset B\subset S$.

The submodular function $f$ is said $G$-invariant when $f(gA)=f(A)$ for any
subsets $A\subset S$ et any $g \in G$.
\end{definition}

Very often, we shall consider submodular functions defined on the set
$\mathcal{P}_{\mathrm{fin}}(S)$ of finite subsets in $S$ rather than on
$\mathcal{P}(S)$. When $S$ is finite, one can check that $f$ is submodular if
and only if for any subsets $A_{1}\subset A_{2}$ of $\mathcal{P}(S)$ and any
$s\in S\setminus A_{2}$, we have%
\begin{equation}
f(A_{1}\cup\{s\})-f(A_{1})\geq f(A_{2}\cup\{s\})-f(A_{2}).\label{AlterSubmod}%
\end{equation}

Let us now introduce examples of submodular functions relevant for our purposes.


\subsection{Combinations of submodular functions}

\label{Subsec_Comb_SMF}We know record the two following easy propositions.

\begin{prop}
The set of nonnegative submodular functions defined from a set $S$ is a cone:
given $f$ and $g$ nonnegative submodular on $\mathcal{P}(S)$ and $(\lambda
,\mu)\in\mathbb{R}_{\geq0}$, the map%
\[
\lambda f+\mu g
\]
is yet submodular nonnegative.
\end{prop}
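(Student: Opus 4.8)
The plan is to verify the two defining properties directly from the definition of submodularity and nonnegativity, since the statement is a straightforward closure property of a cone. First I would fix $f$ and $g$ nonnegative submodular maps on $\mathcal{P}(S)$ and scalars $\lambda,\mu\in\mathbb{R}_{\geq0}$, and set $h=\lambda f+\mu g$. The goal is to check that $h$ satisfies the submodularity inequality~(\ref{Ineg}) and takes nonnegative values.

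For submodularity, I would take arbitrary subsets $A,B\in\mathcal{P}(S)$ and write down the two instances of~(\ref{Ineg}) for $f$ and for $g$, namely
\[
f(A\cap B)+f(A\cup B)\leq f(A)+f(B)\quad\text{and}\quad g(A\cap B)+g(A\cup B)\leq g(A)+g(B).
\]
Multiplying the first by $\lambda\geq0$ and the second by $\mu\geq0$ preserves each inequality, and adding them yields
\[
h(A\cap B)+h(A\cup B)=\lambda f(A\cap B)+\mu g(A\cap B)+\lambda f(A\cup B)+\mu g(A\cup B)\leq h(A)+h(B),
\]
which is exactly~(\ref{Ineg}) for $h$. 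The nonnegativity is even simpler: for any $A\subset S$ we have $h(A)=\lambda f(A)+\mu g(A)\geq0$ as a nonnegative combination of nonnegative reals.

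The only point requiring the slightest care is the sign condition: the argument uses $\lambda,\mu\geq0$ crucially, since multiplying a submodular inequality by a negative scalar would reverse it. No step presents a genuine obstacle here; the content is that the submodular inequalities are preserved under nonnegative linear combination precisely because the inequality sign is respected by multiplication by nonnegative reals and by addition. This establishes that $h$ is nonnegative and submodular, showing the set of nonnegative submodular functions is closed under nonnegative linear combinations and hence forms a cone.
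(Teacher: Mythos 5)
Your proof is correct; the paper actually states this as an ``easy proposition'' and omits any proof, and your argument (multiplying the two submodular inequalities by $\lambda,\mu\geq0$ and adding, plus the trivial nonnegativity check) is exactly the standard verification the authors intend. Nothing is missing, and your remark that nonnegativity of the scalars is what preserves the inequality direction is the one point of substance.
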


Now assume $f$ is submodular ($f$ is not assumed nonnegative here) and $u$ is
a modular map defined on $\mathcal{P}(S)$, that is satisfying%
\[
u(A\cup B)+u(A\cap B)=u(A)+u(B).
\]

\begin{prop}
For any real $\lambda\in\mathbb{R}$, the map $f-\lambda u$ is submodular on
$\mathcal{P}(S)$.
\end{prop}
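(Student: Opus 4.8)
The plan is to verify the defining inequality directly by adding the submodular inequality for $f$ to the modular identity for $u$, so no clever idea is required. Fix arbitrary subsets $A$ and $B$ in $\mathcal{P}(S)$ and set $h=f-\lambda u$. I would compute $h(A\cap B)+h(A\cup B)$ by separating the $f$-contribution from the $u$-contribution, namely writing it as $\bigl(f(A\cap B)+f(A\cup B)\bigr)-\lambda\bigl(u(A\cap B)+u(A\cup B)\bigr)$.

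The first bracket is controlled by the submodularity~(\ref{Ineg}) of $f$, giving $f(A\cap B)+f(A\cup B)\leq f(A)+f(B)$. The second bracket is handled by the hypothesis that $u$ is modular: since $u(A\cup B)+u(A\cap B)=u(A)+u(B)$, the term $-\lambda\bigl(u(A\cap B)+u(A\cup B)\bigr)$ equals $-\lambda\bigl(u(A)+u(B)\bigr)$ \emph{exactly}. Adding the inequality to this equality yields $h(A\cap B)+h(A\cup B)\leq \bigl(f(A)+f(B)\bigr)-\lambda\bigl(u(A)+u(B)\bigr)=h(A)+h(B)$, which is precisely the submodularity of $h$.

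The statement is essentially immediate, so there is no genuine obstacle; the one point worth emphasising is the role of the sign of $\lambda$. Because $u$ satisfies an \emph{equality} rather than a mere inequality, multiplying by $-\lambda$ does not reverse any inequality, and the argument goes through uniformly for every real $\lambda$, whether positive, negative, or zero. This is exactly why modularity (and not just submodularity) of $u$ is assumed: if $u$ were only submodular, then $f-\lambda u$ would be guaranteed submodular only for $\lambda\leq 0$. I would conclude by remarking that, since $A$ and $B$ were arbitrary, $f-\lambda u$ is submodular on all of $\mathcal{P}(S)$.
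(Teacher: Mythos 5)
Your proof is correct and is precisely the direct verification the paper has in mind: the authors state this as one of two ``easy propositions'' and omit the proof entirely, the point being exactly that modularity of $u$ gives an equality, so multiplying by $-\lambda$ for any sign of $\lambda$ and adding it to the submodular inequality for $f$ causes no trouble. Your closing remark on why modularity (rather than mere submodularity) of $u$ is needed for arbitrary $\lambda\in\mathbb{R}$ is also accurate.
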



\subsection{Fundamental examples of submodular functions}

\label{ssec-ex-fonda}

In this subsection, we give four examples of submodular functions. These
functions will be studied in detail in the next sections.

\subsubsection{Group action and graph cut type submodular function}

Let $G$ be a finite group acting on the finite set $X$. For any subset
$Y\subset X$, set%
\[
E_{Y}=\{(g,y)\in G\times Y\mid g\cdot y\notin Y\}.
\]
Consider the cut function
\begin{equation}
f:\left\{
\begin{array}
[c]{l}%
\mathcal{P}(X)\rightarrow\mathbb{Z}_{\geq0}\\
Y\longmapsto\left\vert E_{Y}\right\vert
\end{array}
\right. \label{cut_function}%
\end{equation}

\begin{prop}
\label{Prop_Submod_cut}The previous function $f$ is $G$-invariant submodular
and nonnegative.
\end{prop}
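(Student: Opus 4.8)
The plan is to treat the three asserted properties in turn; only submodularity requires real work. Nonnegativity is immediate, since $f(Y)=\left\vert E_{Y}\right\vert$ is a cardinality. The unifying observation for the other two properties is that $f$ decomposes as a sum over the group: setting
\[
f_{g}(Y)=\left\vert\{y\in Y\mid g\cdot y\notin Y\}\right\vert\qquad(g\in G),
\]
one has $f=\sum_{g\in G}f_{g}$, a finite sum because $G$ is finite. I would establish $G$-invariance and submodularity by working with this decomposition.

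For $G$-invariance, fix $h\in G$ and build a bijection $E_{Y}\to E_{hY}$. I would use $(g,y)\mapsto(hgh^{-1},h\cdot y)$. If $y\in Y$ and $g\cdot y\notin Y$, then $h\cdot y\in hY$ while $(hgh^{-1})\cdot(h\cdot y)=h\cdot(g\cdot y)$ lies outside $hY$, because the map $z\mapsto h\cdot z$ of (\ref{bij}) is a bijection of $X$ carrying $X\setminus Y$ onto $X\setminus hY$. Thus the assignment lands in $E_{hY}$, and $(g',y')\mapsto(h^{-1}g'h,h^{-1}\cdot y')$ is an inverse built the same way, giving $\left\vert E_{hY}\right\vert=\left\vert E_{Y}\right\vert$, i.e. $f(hY)=f(Y)$.

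For submodularity, since each $f_{g}$ is nonnegative, the cone property recorded in Subsection~\ref{Subsec_Comb_SMF} reduces the problem to showing that every $f_{g}$ is submodular. Fixing $g$, the map $y\mapsto g\cdot y$ is a permutation of $X$, and $f_{g}(Y)$ counts the ``edges'' $y\to g\cdot y$ leaving $Y$; I would prove submodularity one edge at a time. For a fixed $y\in X$ put $u=y$ and $v=g\cdot y$ and let $\chi(Y)=\mathbf{1}_{Y}(u)\bigl(1-\mathbf{1}_{Y}(v)\bigr)$, so that $f_{g}=\sum_{y\in X}\chi$. Writing $a_{i}=\mathbf{1}_{Y_{i}}(u)$ and $b_{i}=\mathbf{1}_{Y_{i}}(v)$ for $i=1,2$ and using $\mathbf{1}_{Y_{1}\cap Y_{2}}=\min(\mathbf{1}_{Y_{1}},\mathbf{1}_{Y_{2}})$ and $\mathbf{1}_{Y_{1}\cup Y_{2}}=\max(\mathbf{1}_{Y_{1}},\mathbf{1}_{Y_{2}})$, the submodular inequality for $\chi$ collapses, after the linear terms $a_{1}+a_{2}$ cancel, to
\[
a_{1}b_{1}+a_{2}b_{2}\leq\min(a_{1},a_{2})\min(b_{1},b_{2})+\max(a_{1},a_{2})\max(b_{1},b_{2}).
\]
This is precisely the rearrangement inequality: pairing the two smaller values together and the two larger values together maximises the sum of products. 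Summing over $y\in X$ yields submodularity of $f_{g}$.

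The submodularity step is the crux, and the decisive idea is the reduction to a single edge. I expect the delicate point to be the bookkeeping that turns the inequality for $\chi$ into the displayed rearrangement inequality; once in that form it is transparent. Summing first over $y\in X$ and then over $g\in G$ — the latter legitimate because sums of nonnegative submodular functions are again submodular — completes the proof.
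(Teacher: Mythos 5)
Your proof is correct, and its submodularity argument takes a genuinely different route from the paper's. The paper verifies the diminishing-returns characterization (\ref{AlterSubmod}): for $Y_{1}\subset Y_{2}$ and $y_{0}\notin Y_{2}$ it computes the increment $f(Y_{i}\cup\{y_{0}\})-f(Y_{i})$ explicitly, as the number of pairs $(g,y_{0})$ with $g\cdot y_{0}\notin Y_{i}\cup\{y_{0}\}$ minus the quantity $\left\vert G_{y_{0}}\right\vert\left\vert\mathcal{O}_{y_{0}}\cap Y_{i}\right\vert$ counting the pairs $(g,y)$ that leave $E_{Y_{i}}$ because $g\cdot y=y_{0}$; monotonicity of both terms in $Y_{i}$ then gives the inequality. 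You instead decompose $f=\sum_{g\in G}\sum_{y\in X}\chi_{g,y}$ into elementary edge indicators $\chi_{g,y}(Y)=\mathbf{1}_{Y}(y)\bigl(1-\mathbf{1}_{Y}(g\cdot y)\bigr)$, prove each $\chi_{g,y}$ submodular via the two-term rearrangement inequality (which also covers the degenerate case $g\cdot y=y$, where $\chi_{g,y}\equiv 0$ and equality holds), and conclude by the cone property of Subsection~\ref{Subsec_Comb_SMF}. Your route buys self-containedness: it establishes the defining inequality (\ref{Ineg}) directly for arbitrary pairs of subsets, so it does not rest on the equivalence (\ref{AlterSubmod}), which the paper invokes without proof and which is only stated for finite $S$; it also makes transparent that $f$ is a nonnegative combination of cut functions of the functional digraphs of the permutations $y\mapsto g\cdot y$, thereby extending to arbitrary actions the graph-cut interpretation that the paper's remark gives only for free actions. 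What the paper's computation buys in exchange is the explicit formula for the marginal gain in terms of stabilizers and orbits, which connects the cut function to the orbit-theoretic quantities used elsewhere in the article. Your $G$-invariance argument, via the conjugation bijection $(g,y)\mapsto(hgh^{-1},h\cdot y)$, is identical to the paper's, and nonnegativity is immediate in both.
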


\begin{proof}
Consider two subsets $Y_{1}$ and $Y_{2}$ of $X$ such that $Y_{1}\subset Y_{2}$ and
$y_{0}\in X\setminus Y_{2}$. Then $E_{Y_{1}\cup\{y_{0}\}}$ is the disjoint union of%
\[
\{(g,y)\in G\times Y_{1}\mid g\cdot y\notin
Y_{1}\}\setminus\{(g,y)\in G\times Y_{1}\mid g\cdot y=y_{0}\}\, \textrm{and }
\{(g,y_{0})\mid g\cdot y_{0}\notin Y_{1}\cup\{y_{0}\}\}.
\]
This gives%
\[
f(E_{Y_{1}\cup\{y_{0}\}})=f(E_{Y_{1}})+\left\vert \{(g,y_{0})\mid g\cdot
y_{0}\notin Y_{1} \cup\{y_{0}\}\}\right\vert -\left\vert G_{y_{0}} \right\vert
\left\vert \mathcal{O}_{y_{0}}\cap Y_{1} \right\vert .
\]
Similarly, we have
\[
f(E_{Y_{2}\cup\{y_{0}\}})=f(E_{Y_{2}})+\left\vert \{(g,y_{0})\mid g\cdot
y_{0}\notin Y_{2}\cup\{y_{0}\}\}\right\vert -\left\vert G_{y_{0}} \right\vert
\left\vert \mathcal{O}_{y_{0}}\cap Y_{2}\right\vert .
\]
Now, the assumption $Y_{1}\subset Y_{2}$ implies the set inclusions
\begin{multline*}
\{(g,y_{0})\mid g\cdot y_{0}\notin Y_{2} \cup\{y_{0}\}\}\subset\{(g,y_{0})\mid
g\cdot y_{0} \notin Y_{1} \cup\{y_{0}\} \}\text{ and } \mathcal{O}_{y_{0}}\cap
Y_{1}\subset\mathcal{O}_{y_{0}}\cap Y_{2}.
\end{multline*}
This gives%
\[
f(E_{Y_{1}\cup\{y_{0}\}})-f(E_{Y_{1}})\geq f(E_{Y_{2}\cup\{y_{0}%
\}})-f(E_{Y_{2}})
\]
and $f$ is submodular by (\ref{AlterSubmod}). Moreover, the function $f$ is
clearly nonnegative. Finally, for any $Y\subset X$ and any $g_{0}\in G$ the
map%
\[
\chi_{g_{0}}:\left\{
\begin{array}
[c]{l}%
E_{Y}\rightarrow E_{g_{0}\cdot Y}\\
(g,y)\longmapsto(g_{0}gg_{0}^{-1},g_{0}\cdot y)
\end{array}
\right.
\]
is a bijection which implies the desired equality $f(g_{0}\cdot Y)=f(Y)$.
\end{proof}

\bigskip

\begin{rem}
When the action of $G$ on $X$ is free, it can be represented by an oriented
graph $\Gamma=(X,E)$ with set of vertices $X$ and set of arrows $x\rightarrow
x^{\prime}$ when there exists $g\in G$ such that $x^{\prime}=g\cdot x$.
Observe that such an element $g$ is then unique by assumption. Then the
previous function $f$ becomes the cut function of $\Gamma$ which is classical
in graph theory and known to be submodular.
\end{rem}

\subsubsection{Action on a fixed set or subspace}

\label{sssec-cy-gammay}

Assume that $G$ acts on $X$.\ Fix $Y$ a finite subset of $X$ and $\lambda$ a
real. Let $\mathcal{P}_{\mathrm{fin}}(G)$ be the set of finite subsets in $G$.
Then the map%
\[
c_{Y}:\left\{
\begin{array}
[c]{l}%
\mathcal{P}_{\mathrm{fin}}(G)\rightarrow\mathbb{R}\\
A\longmapsto\left\vert A\cdot Y\right\vert -\lambda\left\vert A\right\vert
\end{array}
\right.
\]
is $G$-invariant submodular for every $\lambda$ and increasing when
$\lambda=0$.\ Indeed, we have for any two finite subsets $A$ and $B$ of $G$
and any $g\in G$
\[
(A\cap B)\cdot Y\subset(A\cdot Y)\cap(B\cdot Y)\text{ and }(A\cup B)\cdot
Y=(A\cdot Y)\cup(B\cdot Y),
\]
and $|gA\cdot Y|=|A\cdot Y|$ and $|gA|=|A|$. Similarly, when $(\rho,V)$ is a
linear representation of $G$ and $W$ a fixed subspace of $V$, the map%
\[
\gamma_{W}:\left\{
\begin{array}
[c]{l}%
\mathcal{P}_{\mathrm{fin}}(G)\rightarrow\mathbb{R}\\
A\longmapsto\dim(A\cdot W)-\lambda|A|
\end{array}
\right.
\]
is $G$-invariant submodular for every $\lambda$ and increasing when
$\lambda=0$ because%
\[
\langle(A\cap B)\cdot W\rangle\subset\langle A\cdot W\rangle\cap\langle B\cdot
W\rangle\quad\text{ and }\quad\langle(A\cup B)\cdot W\rangle=\langle A\cdot
W\rangle+\langle B\cdot W\rangle.
\]

\subsubsection{Action of a fixed subset in a group}

\label{sssec-dA}

When $G$ acts on $X$ and $A$ is a fixed finite subset of $G$ and
$\lambda\mathbb{\ }$a fixed real, we can alternatively consider the map%
\[
d_{A}:\left\{
\begin{array}
[c]{l}%
\mathcal{P}_{\mathrm{fin}}(X)\rightarrow\mathbb{R}\\
Y\longmapsto\left\vert A\cdot Y\right\vert -\lambda\left\vert Y\right\vert
\end{array}
\right.
\]
defined on the set $\mathcal{P}_{\mathrm{fin}}(X)$ of finite subsets of $X$.
This gives yet a submodular function since for any $Y,Z$ in $\mathcal{P}%
_{\mathrm{fin}}(X)$, we have%
\[
A\cdot(Y\cap Z)\subset(A\cdot Y)\cap(A\cdot Z)\text{ and }A\cdot(Y\cup
Z)=(A\cdot Y)\cup(A\cdot Z).
\]

\begin{rem}
When $G$ is Abelian, the submodular function $d_{A}$ is $G$-invariant since
for every finite subset $Y$ of $X$ and every $g\in G$, we have $|A\cdot g\cdot
Y|=|g\cdot A\cdot Y|=|A\cdot Y|$ and $|g\cdot Y|=|Y|$. But this is not
necessarily the case when $G$ is not Abelian as illustrated by the example below.
\end{rem}

\begin{exa}
Assume that $G=\mathfrak{S}_{5}$ regarded as the symmetric group permuting the
set $\{1,2,3,4,5\}$. Set $Y=\{1,2\}$ and take for the subset $A$ the subgroup
of $G$ of permutations of the set $\{3,4,5\}$. Now for $g\in G$ such that
$g(1)=5,g(2)=4,g(3)=3,g(4)=2$ and $g(5)=1$, we have
\[
\left\vert A\cdot Y\right\vert =\left\vert Y\right\vert =2
\]
but%
\[
\left\vert A\cdot g\cdot Y\right\vert =\left\vert A\cdot\{4,5\}\right\vert
=\left\vert \{3,4,5\}\right\vert =3.
\]

\end{exa}

\section{Fragments and atoms}

\label{sec-fragment-atom}

In this section, we derive some minimisation properties of submodular
functions and their applications to the case of $G$-invariant submodular maps.

\subsection{Definitions and general properties}

In this paragraph, we fix a submodular function $f$ defined on $\mathcal{P}%
(S)$ such that $m=\min_{Y\neq\emptyset\in\mathcal{P}_{\mathrm{fin}}(S)}f(Y)$
exists. Then a \emph{fragment} for $f$ is a nonempty finite subset $Y$ of $S$
such that $f(Y)=m$. An \emph{atom} for $f$ is a fragment of minimum
cardinality. Observe that there exists at least one fragment and one atom by
the hypotheses on $f$. Moreover, by definition, \emph{all the atoms have the
same finite cardinality}.

\begin{lem}
\label{Lem_EmptyInter}Assume $A_{1}$ and $A_{2}$ are two atoms for the
submodular function $f$. Then $A_{1}=A_{2}$ or $A_{1}\cap A_{2}=\emptyset$.
\end{lem}

\begin{proof}
Assume $A_{1}\cap A_{2}$ is not empty.\ Since $f$ is a submodular function on
$\mathcal{P}_{\mathrm{fin}}(S)$, we can write%
\[
f(A_{1}\cap A_{2})+f(A_{1}\cup A_{2})\leq f(A_{1})+f(A_{2})=2m
\]
by using that $A_{1}$ and $A_{2}$ are atoms. We have $f(A_{1}\cap A_{2})\geq
m$ and $f(A_{1}\cup A_{2}) \geq m$ since $m=\min_{Y\neq\emptyset\in
\mathcal{P}_{\mathrm{fin}}(S)}f(Y)$, we get $f(A_{1}\cap A_{2})=f(A_{1}\cup
A_{2})=m$. Hence both $A_{1}\cup A_{2}$ and $A_{1}\cap A_{2}$ are fragments
for $f$.\ Now, observe that $A_{1}\cap A_{2}\subset A_{1}$. Thus by minimality
of the cardinality of an atom, we have $\left\vert A_{1}\cap A_{2}\right\vert
=\left\vert A_{1}\right\vert $ and therefore $A_{1}\cap A_{2}=A_{1}$ which
means that $A_{1} \subset A_{2}$. But $A_{1}$ and $A_{2}$ have same
cardinality since they are atoms. So $A_{1}=A_{2}$.
\end{proof}

\subsection{Invariant submodular functions on groups}

Let $G$ be a group and $f:\mathcal{P}_{\mathrm{fin}}(G)\rightarrow\mathbb{R}$
a submodular function.\ Recall that it is said $G$-invariant when $f(gA)=f(A)$ for
any $g\in G$ and any finite subset $A\subset G$.

\begin{prop}
\label{Prop_H}Assume that $f$ is a $G$-invariant submodular map such that
$m=\min_{A\neq\emptyset\in\mathcal{P}_{\mathrm{fin}}(G)}f(A)$ exists. Then,
there exists a unique atom $H$ for $f$ containing $1$. Moreover $H$ is a
finite subgroup of $G$, the atoms of $G$ are the left cosets $gH$ with $g\in
G$ and they yield a partition of $G$.
\end{prop}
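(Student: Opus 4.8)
The plan is to exploit the $G$-invariance of $f$ to manufacture atoms by left translation, and then to feed this into Lemma~\ref{Lem_EmptyInter} (two atoms are equal or disjoint) to force an algebraic structure. The key preliminary observation I would record is that translation preserves atoms: if $A$ is an atom, then for every $g\in G$ we have $f(gA)=f(A)=m$ by $G$-invariance and $|gA|=|A|$, so $gA$ is again a fragment of minimal cardinality, hence an atom.

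First I would settle existence and uniqueness of the atom through $1$. The hypotheses guarantee at least one atom $A$; choosing $a\in A$, the translate $a^{-1}A$ is an atom (by the remark above) and contains $1=a^{-1}a$, which gives existence. For uniqueness, two atoms both containing $1$ intersect (in $1$), so Lemma~\ref{Lem_EmptyInter} forces them to coincide. Call the resulting unique atom $H$; it is finite because fragments are finite by definition.

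Next I would prove that $H$ is a subgroup by establishing $hH=H$ for every $h\in H$. Indeed $hH$ is an atom by the translation remark, and it meets $H$ since $h=h\cdot 1\in hH$ while $h\in H$; Lemma~\ref{Lem_EmptyInter} then upgrades this to $hH=H$. From the family of identities $hH=H$ $(h\in H)$ one reads off closure under multiplication ($hh'\in hH=H$), the existence of inverses (from $1\in H=hH$ there is $x\in H$ with $hx=1$), and $1\in H$, so $H$ is a subgroup. Finally I would identify every atom as a coset: each $gH$ is an atom by translation, and conversely, for an arbitrary atom $A$ and any $a\in A$, the atom $a^{-1}A$ contains $1$ and hence equals $H$ by uniqueness, so $A=aH$. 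The atoms are therefore exactly the left cosets $gH$, and cosets of a subgroup always partition $G$.

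The step that does the actual work is the subgroup argument: everything hinges on recognising that $G$-invariance converts the abstract disjointness dichotomy of Lemma~\ref{Lem_EmptyInter} into the clean algebraic identity $hH=H$. Once that is in hand, the remaining assertions are essentially bookkeeping, and the partition statement is just the standard decomposition of a group into cosets of the subgroup $H$.
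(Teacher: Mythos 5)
Your proof is correct and follows essentially the same route as the paper's: translate atoms using $G$-invariance, get existence and uniqueness of the atom $H$ containing $1$ via Lemma~\ref{Lem_EmptyInter}, deduce the subgroup property from translation identities, and identify all atoms as left cosets. The only cosmetic difference is that for the subgroup step you use $hH\cap H\neq\emptyset$ and the disjointness dichotomy, whereas the paper uses $h^{-1}H\ni 1$ and uniqueness of the atom through $1$; both arguments are equivalent instances of the same mechanism.
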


\begin{proof}
The existence of an atom is obtained as in the previous paragraph. Now, if $A$
is an atom, since it is nonempty, we get that $a^{-1}A$ is also an atom for
any $a\in A$ because $f(a^{-1}A)=m$. Then $H=a^{-1}A$ is an atom containing
$1$. Let $H^{\prime}$ be another atom containing $1$. Then $H\cap H^{\prime}$
is nonempty, thus by Lemma \ref{Lem_EmptyInter}, we must have $H=H^{\prime}$
which proves that there exists indeed a unique atom $H$ containing $1$.\ Given
$h\in H$, we show similarly that $h^{-1}H$ is an atom containing $1$ so that
$h^{-1}H=H.$ Therefore, for any $h,h^{\prime}\in H$ we get that $h^{-1}%
h^{\prime}$ belongs to $H$ which shows that $H$ is a subgroup of $G$ (finite
by definition of an atom for $f$). Let $A$ be an atom for $H$. Then, for any $a\in A$, the
atom $a^{-1}A$ coincides with $H$ because it contains $1$. Thus, $A=aH$ is a
left coset of $H$.\ It is then well-known that the left cosets of $H$ give a
partition of $G$.
\end{proof}

\subsection{Invariant submodular functions for group actions}

\label{Subsec_leftinvariant}Assume that $G$ acts on the set $X$ and consider
$f:\mathcal{P}_{\mathrm{fin}}(X)\rightarrow\mathbb{R}$ a submodular function.
Recall that the function $f$ is said $G$-invariant if for any $g\in G$ and any
$Y\subset X$, we have $f(g\cdot Y)=f(Y)$. Assume that $m=\inf_{Y\neq
\emptyset\in\mathcal{P}_{\mathrm{fin}}(X)}f(Y)$ exists and $f$ is $G$-invariant. 
In this case, we get by Lemma \ref{Lem_EmptyInter} that for any
atom $Y_{0}$ and any $g\in G$%
\[
g\cdot Y_{0}\text{ is an atom such that }g\cdot Y_{0}=Y_{0}\text{ or }g\cdot
Y_{0}\cap Y_{0}=\emptyset\text{.}%
\]
Let $\mathcal{A}$ be the set of atoms for $f$. We thus get an action of the
group $G$ on the set of atoms $\mathcal{A}$. Now given any element $y_{0}$ in
the atom $Y_{0}$, we obtain the inclusion $G_{y_{0}}\subset G_{Y_{0}}$ of the
stabilizers of $y_{0}$ and $Y_{0}$ for the action of $G$ on $X$. Indeed, for
any $g\in G_{y_{0}}$, we have $y_{0}=g\cdot y_{0}\in g\cdot Y_{0}$ and also
$y_{0}\in Y_{0}$. Therefore $g\cdot Y_{0}\cap Y_{0}\neq\emptyset$ and $g\cdot
Y_{0}=Y_{0}$ which means that $g$ belongs to $G_{Y_{0}}$. Observe also that if
$y_{0}$ belongs to the atom $Y_{0}$, then any element $g\cdot y_{0}$ also
belongs to an atom (because $g\cdot y_{0}$ belongs to $g\cdot Y_{0}$)$.$ We
will call the set
\[
\mathcal{C}(X)=%
{\textstyle\coprod\limits_{Y_{0}\in\mathcal{A}}}
Y_{0}%
\]
the \emph{core} of $X$.\ The action of $G$ on $X$ restricts to an action on
$\mathcal{C}(X)$ and thus, the set $\mathcal{C}(X)$ is a disjoint union of
orbits for the action of $G$ on $X$. Moreover, for any such orbit
$\mathcal{O}$ and any atom $Y_{0}$, we have
\[
\mathcal{O}\cap Y_{0}=\emptyset\text{ or }\mathcal{O}\cap Y_{0}=\{g\cdot
y\mid\overline{g}\in G_{Y_{0}}/G_{y_{0}}\}\text{ with }y_{0}\in\mathcal{O}\cap
Y_{0}\,,
\]
that is, $\mathcal{O}\cap Y_{0}$ is empty or parametrised by the elements of
the coset $G_{Y_{0}}/G_{y_{0}}$ with $y_{0}\in\mathcal{O}\cap Y_{0}$ since
$G_{y_{0}}$ is then a subgroup of $G_{Y_{0}}$. In particular, if the action of
$G$ on $X$ is assumed to be transitive, we have a unique orbit, $\mathcal{C}%
(X)=X$ and the atoms form a partition of $X$. Let us summarize the previous observations.

\begin{prop}
\label{Prop_LMF}Assume that $G$ acts on the set $X$ and $f:\mathcal{P}%
_{\mathrm{fin}}(X)\rightarrow\mathbb{R}$ is a $G$-invariant submodular
function such that $m=\inf_{Y\neq\emptyset\in\mathcal{P}_{\mathrm{fin}}%
(X)}f(Y)$ exists.

When the action of $G$ on $X$ is transitive, $\mathcal{C}(X)=X$, each element
of $X$ belongs to one atom.

Moreover the atoms for $f$ are blocks of imprimitivity of the action meaning
that we have the following properties:

\begin{enumerate}
\item The group $G$ acts on the set $\mathcal{A}$ of atoms for $f$.

\item The action of $G$ restricts to the core $\mathcal{C}(X)$ of $X$, defined
as the disjoint union of the atoms for $f$ which is thus also a disjoint union
of orbits for the action of $G$ on $X$.

\item For any atom $Y_{0}$, any $y_{0} \in Y_{0}$ and any orbit $\mathcal{O}$,
we have $G_{y_{0}} \subset G_{Y_{0}}$. Moreover the intersection set
$\mathcal{O}\cap Y_{0}$ is empty or parametrised by the elements of the coset
$G_{Y_{0}}/G_{y_{0}}$ with $y_{0}\in\mathcal{O}\cap Y_{0}$.
\end{enumerate}
\end{prop}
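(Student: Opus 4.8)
The plan is to verify the three numbered assertions together with the transitivity statement by exploiting two facts already in hand: the $G$-invariance of $f$ combined with the cardinality preservation of the maps~\eqref{bij}, and the dichotomy of Lemma~\ref{Lem_EmptyInter} that any two atoms are either equal or disjoint. The whole argument is essentially an organisation of the observations made in the paragraph preceding the statement, so no new estimate beyond submodularity should be required.

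First I would establish (1). Given an atom $Y_{0}$ and $g\in G$, the map $x\mapsto g\cdot x$ is a bijection of $X$, so $\left\vert g\cdot Y_{0}\right\vert =\left\vert Y_{0}\right\vert$, while $G$-invariance gives $f(g\cdot Y_{0})=f(Y_{0})=m$. Hence $g\cdot Y_{0}$ is again a fragment of minimal cardinality, i.e.\ an atom, and $(g,Y_{0})\mapsto g\cdot Y_{0}$ defines an action of $G$ on $\mathcal{A}$. Assertion (2) then follows immediately: if $y\in\mathcal{C}(X)$ lies in an atom $Y_{0}$, then $g\cdot y\in g\cdot Y_{0}\in\mathcal{A}$, so $g\cdot y\in\mathcal{C}(X)$; thus $\mathcal{C}(X)$ is $G$-stable and is therefore a disjoint union of orbits. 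For the transitive case, I would pick any atom $Y_{0}$ and $y_{0}\in Y_{0}$; by transitivity every $x\in X$ equals $g\cdot y_{0}$ for some $g$, whence $x\in g\cdot Y_{0}\subset\mathcal{C}(X)$, giving $\mathcal{C}(X)=X$, and the atoms partition $X$ by Lemma~\ref{Lem_EmptyInter}.

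The inclusion $G_{y_{0}}\subset G_{Y_{0}}$ in (3) is direct: if $g\cdot y_{0}=y_{0}$ then $y_{0}\in(g\cdot Y_{0})\cap Y_{0}$, so the two atoms $g\cdot Y_{0}$ and $Y_{0}$ meet and hence coincide by Lemma~\ref{Lem_EmptyInter}, i.e.\ $g\in G_{Y_{0}}$. The only part demanding genuine care is the parametrisation of $\mathcal{O}\cap Y_{0}$. Fixing $y_{0}\in\mathcal{O}\cap Y_{0}$ (when this set is nonempty), I would show that $gG_{y_{0}}\mapsto g\cdot y_{0}$ is a well-defined bijection from $G_{Y_{0}}/G_{y_{0}}$ onto $\mathcal{O}\cap Y_{0}$; the cosets make sense precisely because $G_{y_{0}}\subset G_{Y_{0}}$. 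Well-definedness and injectivity are the usual orbit--stabiliser computation, and for $g\in G_{Y_{0}}$ one has $g\cdot y_{0}\in g\cdot Y_{0}=Y_{0}$ together with $g\cdot y_{0}\in\mathcal{O}$, so the image indeed lands in $\mathcal{O}\cap Y_{0}$.

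The step I expect to be the real crux is surjectivity, which is exactly where the non-freeness of the action must be controlled: given $z\in\mathcal{O}\cap Y_{0}$, transitivity on the orbit provides $g$ with $z=g\cdot y_{0}$, but a priori such a $g$ need not stabilise $Y_{0}$. The key observation is that $z=g\cdot y_{0}\in g\cdot Y_{0}$ while also $z\in Y_{0}$, forcing $(g\cdot Y_{0})\cap Y_{0}\neq\emptyset$; once more Lemma~\ref{Lem_EmptyInter} yields $g\cdot Y_{0}=Y_{0}$, that is $g\in G_{Y_{0}}$, so $z$ lies in the image. In this way every verification reduces to the equal-or-disjoint dichotomy of atoms, and the apparent difficulty caused by nontrivial stabilisers is resolved entirely through Lemma~\ref{Lem_EmptyInter}.
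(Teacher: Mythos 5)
Your proposal is correct and follows essentially the same route as the paper: the paper's own argument (the paragraph preceding the proposition, which the proposition merely summarizes) also derives everything from $G$-invariance plus the bijectivity of $x\mapsto g\cdot x$, and reduces each claim -- the action on $\mathcal{A}$, the inclusion $G_{y_{0}}\subset G_{Y_{0}}$, and the coset parametrisation of $\mathcal{O}\cap Y_{0}$ -- to the equal-or-disjoint dichotomy of Lemma~\ref{Lem_EmptyInter}. The only difference is that you spell out the well-definedness, injectivity and (via Lemma~\ref{Lem_EmptyInter}) surjectivity of $gG_{y_{0}}\mapsto g\cdot y_{0}$, details the paper leaves implicit.
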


\begin{exa}
\label{ex-cut} For each action of a finite group $G$ on the finite set $X$,
one can consider the cut function $f$ as defined in (\ref{cut_function}). By
Proposition \ref{Prop_Submod_cut}, it is nonnegative submodular and 
$G$-invariant. Also the minimum of $f$ is equal to zero and is attained in any
subset $Y$ such that $g\cdot y\in Y$ for any $g\in G$ and any $y\in Y$. This
means that the fragments of $f$ are the disjoint union of orbits and the atoms
are the orbits of minimal cardinality. The core is the disjoint union of the
orbits with minimal cardinality.
\end{exa}

\begin{exa}
\label{ex-symgroup} Here is another example in which atoms are the orbits with
minimal cardinality; the submodular function considered is the function
$d_{A}$ of Subsection~\ref{sssec-dA} with $\lambda>0$. Fix $\sigma
\in\mathfrak{S}_{n}$, consider $X=\{1,\ldots,n\}$ and $A=\langle\sigma\rangle
$. As suggested in Section~\ref{subsection-orbit}, $X$ can be written as
$X=X_{1}\sqcup\cdots\sqcup X_{r}$ where the $X_{i}$ are the orbits of $X$
under the action of $A$. In this case, $d_{A}(Y)=\sum_{j,X_{j}\cap
Y\neq\emptyset}|X_{j}|-\lambda|Y|$. Among the subsets $Z$ of $X$ meeting non
trivially exactly the same $X_{i}$ as $Y$, $d_{A}(Z)$ is minimal precisely
when $Z=\cup_{j,X_{j}\cap Y\neq\emptyset}X_{j}$. In this case, $d_{A}%
(Z)=(1-\lambda)|Z|$. Thus, for $\lambda<1$, the fragments and atoms coincide
and are the $X_{j}$ with minimal cardinality. When $\lambda=1$, every union of
orbits is a fragment and the atoms are the $X_{j}$ with minimal cardinality.
\end{exa}

\section{Generalising results in additive group theory with submodular
functions}

\label{sec-application}

\label{Section_General_Hami} This section is devoted to the study of the
submodular functions $c_{Y},\gamma_{Y},d_{A}$ of \S ~\ref{sssec-cy-gammay} and
\S ~\ref{sssec-dA}. Each of its subsection is devoted to the study of one of
these three submodular maps. We start with $c_{Y}$ which allows us to
generalise three classical results. Subsection~\ref{ssec-gammay} is devoted to
$\gamma_{Y}$: we show that the results proved in the preceding subsection
extend to linear actions. Finally, in Subsection~\ref{ssec-dA}, we are able to
state results analogous to the one obtain for $c_{Y}$ in the case of an action
by an Abelian group. We also study the atoms for small or big values of the
parameter $\lambda$ in the definition of $d_{A}$.

In any cases, recall that we consider an action of the group $G$ on a set $X$
(or a linear action on a vector space $V$). The functions $c_{Y},\gamma_{Y}$,
$Y\subset X$ are defined on $\mathcal{P}_{\mathrm{fin}}(G)$ from a fixed
finite subset of $X$ or $V$ whereas the functions $d_{A}$ is defined on
$\mathcal{P}_{\mathrm{fin}}(X)$ from a finite fixed subset $A\subset G$. Also,
all these functions attain their minimum on their restrictions to nonempty
subsets as soon as they are nonnegative because their images are discrete
subsets of $\mathbb{R}$.

\subsection{Group action context and submodular functions $c_{Y}$}

\label{ssec-cy}

The submodularity and $G$-invariance of $c_{Y}$ allow us to generalise a
theorem of Hamidoune, a theorem of Petridis and Tao and a theorem of Tao on
small doubling sets.

\subsubsection{A generalisation of a theorem of Hamidoune}

Let us start with an observation which is not relevant in the context of
additive group theory but crucial in our group action context. Consider the
map%
\[
q_{Y}:\left\{
\begin{array}
[c]{l}%
\mathcal{P}_{\mathrm{fin}}(G)\setminus\{\emptyset\}\rightarrow\mathbb{Q}%
_{>0}\\
A\longmapsto\frac{\left\vert A\cdot Y\right\vert }{\left\vert A\right\vert }%
\end{array}
\right.
\]
Then it might happen that
\begin{equation}
\mu=\inf_{A\in\mathcal{P}_{\mathrm{fin}}(G)\setminus\{\emptyset\}}%
q_{Y}(A)=0.\label{H}%
\end{equation}
This will be in particular the case if $G_{Y}$ is an infinite subgroup of $G$
since subsets $A$ in $G_{Y}$ may have arbitrary large cardinalities whereas
$\left\vert AY\right\vert =\left\vert Y\right\vert $ is then fixed. In the
opposite direction, we will always have $\mu>0$ when

\begin{enumerate}
\item there exists an element $y_{0}\in Y$ such that $G_{y_{0}}=\{1\}$ and
then $\mu\geq1$ (this is in particular true if we consider the action by left
translation of $G$ on itself),

\item or the group $G$ is finite and then $\mu\geq\frac{\left\vert
Y\right\vert }{\left\vert G\right\vert }$ because we always have $\left\vert
A\cdot Y\right\vert \geq\left\vert Y\right\vert $ and $Y\ $is fixed.
\end{enumerate}

To overcome this difficulty, we need in general the assumption
\begin{equation}
\mu=\inf_{A\in\mathcal{P}_{\mathrm{fin}}(G)\setminus\{\emptyset\}}%
\frac{\left\vert A\cdot Y\right\vert }{\left\vert A\right\vert }%
>0.\label{def(mu)}%
\end{equation}

\begin{exa}
Let us compute the value of $\mu$ for some actions.

\begin{enumerate}
\item When the action is free (for example in the case of the left translation
of $G$ on itself), we have $\left\vert A\cdot Y\right\vert \geq\left\vert
A\right\vert $ so that $\mu\geq1$.

\item For the action of the symmetric group $\mathfrak{S}_{n}$ on
$\{1,\ldots,n\}$, when $\left\vert A\cdot Y\right\vert =\ell$, we get with the
notation of Example \ref{Example_CounterKneser}%
\[
\inf_{A\neq\emptyset\in\mathcal{P}_{\mathrm{fin}}(\mathfrak{S}_{n}%
)\mid\left\vert A\cdot Y\right\vert =\ell}\frac{\left\vert A\cdot Y\right\vert
}{\left\vert A\right\vert }=\frac{\left\vert A_{0}\cdot Y\right\vert
}{\left\vert A_{0}\right\vert }=\frac{\ell}{\frac{\ell!}{(\ell-k)!}(n-k)!}%
\]
which is minimal for $\ell=n$ and then%
\[
\mu=\frac{n}{n!}=\frac{1}{(n-1)!}.
\]

\item Assume $G$ is finite and acts on itself by conjugation. If we consider
$Y$ a subset of $Z(G)$, the center of $G$, we get $A\cdot Y=Y$ for any subset
$A\subset G$. Then%
\[
\mu=\inf_{A\in\mathcal{P}(G)\setminus\{\emptyset\}}\frac{\left\vert A\cdot
Y\right\vert }{\left\vert A\right\vert }=\frac{\left\vert Y\right\vert
}{\left\vert G\right\vert }.
\]

\item We get similarly $\mu=\frac{\left\vert Y\right\vert }{\left\vert
G\right\vert }$ as soon as $Y$ is a set of fixed elements under the action of
$G$.
\end{enumerate}
\end{exa}

\begin{rem}
Assume $G$ is infinite and the infimum $\mu$ in (\ref{def(mu)}) is attained
for the subset $A_{0}\subset G$, that is $\mu=\frac{\left\vert A_{0}\cdot
Y\right\vert }{\left\vert A_{0}\right\vert }>0$. Since we have $G_{Y}\cdot
Y=Y$ for the stabilizer $G_{Y}$ of $Y$, the set $A_{0}$ is a disjoint union of
left $G_{Y}$-cosets. In particular, $G_{Y}$ is finite.
\end{rem}

Under the assumption $\mu>0$, for any $\lambda\in\lbrack0,\mu]$, the
$G$-invariant submodular function $c_{Y}$ defined on $\mathcal{P}%
_{\mathrm{fin}}(G)$ by $c_{Y}(A)=\left\vert A\cdot Y\right\vert -\lambda
\left\vert A\right\vert $ is non negative since
\[
c_{Y}(A)=\left\vert A\cdot Y\right\vert -\lambda\left\vert A\right\vert
\geq\left\vert A\cdot Y\right\vert -\lambda\left\vert A\right\vert \geq0\,.
\]
Observe that
\[
c_{Y}(A)\geq(\mu-\lambda)\left\vert A\right\vert .
\]
We get the following theorem.

\begin{thm}
\label{Th_Hami}Consider a subset $Y\subset X$ and set
\[
\mu=\inf_{A\in\mathcal{P}_{\mathrm{fin}}(G)\setminus\{\emptyset\}}%
\frac{\left\vert A\cdot Y\right\vert }{\left\vert A\right\vert }.
\]
Then

\begin{itemize}
\item either $\mu=0$,

\item or for any $\lambda\in\lbrack0,\mu]$, there exists a finite subgroup $H
$ of $G$ containing $G_{Y}$ such that%
\begin{equation}
c_{Y}(A) \geq\ c_{Y}(H) \geq\left\vert Y\right\vert -\lambda\left\vert
H\right\vert \label{Hami}%
\end{equation}
for any finite subset $A$ in $G$.\ 
\end{itemize}
\end{thm}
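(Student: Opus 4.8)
The plan is to run the atom machinery of Proposition~\ref{Prop_H} on the $G$-invariant submodular function $c_{Y}$, where $G$ acts on itself by left translation. First I dispose of the trivial alternative: if $\mu=0$ we are done, so assume $\mu>0$. I would observe at once that this forces $G_{Y}$ to be finite, since if $G_{Y}$ were infinite, any finite $A\subseteq G_{Y}$ would satisfy $A\cdot Y=Y$, so that $q_{Y}(A)=|Y|/|A|$ could be made arbitrarily small, contradicting $\mu>0$. Fixing $\lambda\in[0,\mu]$, the bound $c_{Y}(A)\geq(\mu-\lambda)|A|\geq 0$ recorded before the statement shows $c_{Y}$ is nonnegative; and for $\lambda<\mu$ this same bound makes $|A|$ bounded on every sublevel set $\{c_{Y}\leq T\}$, whence $|A\cdot Y|\leq|A|\,|Y|$ is bounded there too, leaving only finitely many possible values of $c_{Y}$ below $T$. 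Thus the minimum $m=\min_{A\neq\emptyset}c_{Y}(A)$ is attained, and Proposition~\ref{Prop_H} produces the unique atom $H\ni 1$, which is a finite subgroup of $G$ with $c_{Y}(H)=m$, the atoms being its left cosets.

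The heart of the argument is the inclusion $G_{Y}\subseteq H$, which I would establish for $\lambda>0$ by a direct computation with the product set $HG_{Y}$. Since every $s\in G_{Y}$ fixes $Y$ setwise, one has
\[
HG_{Y}\cdot Y=\bigcup_{h\in H}h\cdot\Big(\bigcup_{s\in G_{Y}}s\cdot Y\Big)=\bigcup_{h\in H}h\cdot Y=H\cdot Y,
\]
so that $c_{Y}(HG_{Y})=|H\cdot Y|-\lambda|HG_{Y}|\leq|H\cdot Y|-\lambda|H|=c_{Y}(H)=m$, using $\lambda\geq 0$ together with $H=H\cdot 1\subseteq HG_{Y}$. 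As $m$ is the global minimum, this is an equality, and since $\lambda>0$ it forces $|HG_{Y}|=|H|$, hence $HG_{Y}=H$ and in particular $G_{Y}=1\cdot G_{Y}\subseteq H$. The endpoint $\lambda=0$ must be treated apart, because there the atom is merely a singleton: in that case $c_{Y}(A)=|A\cdot Y|\geq|Y|=c_{Y}(G_{Y})$, so one simply takes $H=G_{Y}$, a finite subgroup containing $G_{Y}$ and realising the minimum.

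Granting this, both inequalities in~(\ref{Hami}) are immediate: $c_{Y}(A)\geq c_{Y}(H)$ is nothing but the minimality $c_{Y}(H)=m$, and $c_{Y}(H)=|H\cdot Y|-\lambda|H|\geq|Y|-\lambda|H|$ follows from $Y=1\cdot Y\subseteq H\cdot Y$. The step I expect to be the genuine obstacle is the attainment of the minimum at the boundary value $\lambda=\mu$, where the growth estimate $c_{Y}(A)\geq(\mu-\lambda)|A|$ degenerates and no longer bounds $|A|$ on sublevel sets; there one must know that $\mu$ itself is attained, equivalently that the image of $c_{Y}$ stays discrete near $0$, after which the atom $H$ and the computation above apply verbatim. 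I would address this either through the discreteness of the image invoked in the section preamble or by analysing the atoms $H_{\lambda}$ as $\lambda\uparrow\mu$ and showing the family stabilises.
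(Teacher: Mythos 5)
Your proposal follows the paper's proof essentially step for step: nonnegativity of $c_{Y}$ from $\lambda\leq\mu$, Proposition~\ref{Prop_H} to obtain the unique atom $H\ni 1$ as a finite subgroup whose left cosets are the atoms, the inclusion $G_{Y}\subseteq H$ by a minimality argument, and the two inequalities of~(\ref{Hami}) from $c_{Y}(H)=m$ and $Y\subseteq H\cdot Y$. Your variant for $G_{Y}\subseteq H$ (comparing $c_{Y}(HG_{Y})$ with $c_{Y}(H)$) is equivalent to the paper's, which adjoins a single element $g\in G_{Y}\setminus H$ and computes $c_{Y}(H\cup\{g\})=c_{Y}(H)-\lambda<c_{Y}(H)$; your separate treatment of $\lambda=0$ via $H=G_{Y}$ is exactly the paper's. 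For $\lambda\in[0,\mu)$ your argument is complete, and in fact more careful than the paper's on the existence of the minimum $m$: you derive it from boundedness of $|A|$ on sublevel sets, where the paper appeals only to the loose discreteness remark in the section preamble.

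The genuine gap is the endpoint $\lambda=\mu$, which you flag but leave open: neither of your two suggested repairs is carried out, so your text does not prove the theorem there. For comparison, the paper closes this case by choosing $A_{0}\in\mathcal{P}_{\mathrm{fin}}(G)$ with $\mu=|A_{0}\cdot Y|/|A_{0}|$, i.e.\ by assuming the infimum defining $\mu$ is attained; granted this, $c_{Y}(A_{0})=(\mu-\lambda)|A_{0}|$ vanishes at $\lambda=\mu$, so $m=0$ exists and the atom machinery runs. But be aware that this attainment is itself unjustified in the paper and can fail: for $G=X=\mathbb{Z}$ acting on itself by translation and $Y=\{0,1\}$, every finite nonempty $A$ satisfies $|A+Y|\geq|A|+1$, so $\mu=1$ is an infimum that is never attained (although in that example the minimum of $c_{Y}$ at $\lambda=1$ does exist, being attained on intervals, so the conclusion of the theorem still holds). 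Thus the obstacle you identified is real and is in fact a weak point of the paper's own proof; to finish, you must either prove or explicitly assume that $\mu$ is attained, or else produce an independent argument that $\min_{A\neq\emptyset} c_{Y}(A)$ exists when $\lambda=\mu$.
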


\begin{proof}
Assume $\mu>0$ and set as usual $m=\min_{A\neq\emptyset\in\mathcal{P}%
_{\mathrm{fin}}(G)}c_{Y}(A)$. The case $\lambda=0$ is trivial (take $H=G_{Y}$
and remark that $G_{Y}$ is finite by a previous remark). Consider $\lambda
\in\rbrack0,\mu]$ and $A_{0}\in\mathcal{P}_{\mathrm{fin}}(G)$ such that
$\mu=\frac{\left\vert A_{0}\cdot Y\right\vert }{\left\vert A_{0}\right\vert }%
$. Then, for any $A\in\mathcal{P}_{\mathrm{fin}}(G)$, we have
\[
c_{Y}(A)=\left\vert A\cdot Y\right\vert -\lambda\left\vert A\right\vert
\geq\left\vert A_{0}\cdot Y\right\vert -\lambda\left\vert A_{0}\right\vert
\geq(\mu-\lambda)\left\vert A_{0}\right\vert \geq0
\]
so that $c_{Y}$ is a nonnegative submodular function.\ By Proposition
\ref{Prop_H}, there thus exists a unique atom $H$ for $c_{Y}$ containing $1$
which is a subgroup of $G$. Assume there exists $g\in G_{Y}$ such $g\notin H$.
Then
\[
c_{Y}(H\cup\{g\})=\left\vert (H\cup\{g\})\cdot Y\right\vert -\lambda\left\vert
H\cup\{g\}\right\vert =c_{Y}(H)-\lambda<c_{Y}(H)
\]
and $H\cup\{g\}$ is nonempty. This contradicts the fact that $H$ is an atom.
Thus, we must have $G_{Y}\subset H$. Also since $H$ is an atom, we have for
any finite subset $A$ in $G$%
\[
c_{Y}(A)=\left\vert A\cdot Y\right\vert -\lambda\left\vert A\right\vert
\geq\left\vert H\cdot Y\right\vert -\lambda\left\vert H\right\vert =c_{Y}(H)
\]
Since $1 \in H$, we have $Y \subset H \cdot Y$ which gives $c_Y(H) \geq |Y| - \lambda |H|$.
\end{proof}

\begin{rem}
\ 

\begin{enumerate}
\item Observe that when $\mu=0$, then $\lambda=0$ and the inequality
(\ref{Hami}) still holds since it reduces to $\left\vert A\cdot Y\right\vert
\geq\left\vert Y\right\vert $.

\item When $Y$ contains an element with trivial stabilizer, we have $\mu\geq1
$ and the theorem generalises Hamidoune's one when $G$ acts on itself by left translation.

\item Note that we must have $H=\{1\}$ when $G$ is torsion free because $H$ is
a finite subgroup of $G$.
\end{enumerate}
\end{rem}

Consider a finite subset $Y$ in $X$ such that $\mu>0$.

\begin{cor}
For any $\lambda\in]0,\mu]$ and any finite subset $A_{0}$ in $G$ there exists
a subgroup $H$ of $G$ containing $G_{Y}$ such that
\[
\lambda\max_{A\subset G\mid A\cdot Y=A_{0}\cdot Y}\left\vert A\right\vert
+\left\vert Y\right\vert \leq\lambda\left\vert H\right\vert +\left\vert
A_{0}\cdot Y\right\vert .
\]

\end{cor}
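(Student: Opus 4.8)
The plan is to derive the corollary directly from Theorem~\ref{Th_Hami}. The key observation is that the quantity $c_Y(A) = |A\cdot Y| - \lambda|A|$ depends on $A$ only through the two numbers $|A\cdot Y|$ and $|A|$, so among all subsets $A$ producing the \emph{same} product set $A\cdot Y = A_0\cdot Y$, minimising $c_Y(A)$ is equivalent to maximising $|A|$. This is exactly the left-hand side that appears in the corollary.

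First I would invoke Theorem~\ref{Th_Hami}: since $\mu>0$ and $\lambda\in\,]0,\mu]$, there exists a finite subgroup $H$ of $G$ containing $G_Y$ such that $c_Y(A)\geq c_Y(H)$ for every finite subset $A\subset G$. I would then fix an arbitrary finite $A_0\subset G$ and look only at those $A$ satisfying $A\cdot Y = A_0\cdot Y$. For each such $A$ the inequality $c_Y(A)\geq c_Y(H)$ reads
\[
|A_0\cdot Y| - \lambda|A| \geq |H\cdot Y| - \lambda|H|,
\]
where I have used $|A\cdot Y| = |A_0\cdot Y|$. Rearranging gives $\lambda|A| \leq |A_0\cdot Y| - |H\cdot Y| + \lambda|H|$. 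Taking the supremum over all admissible $A$ (which is attained, since these are subsets of the finite candidate set and cardinalities are bounded) yields
\[
\lambda\max_{A\subset G\mid A\cdot Y = A_0\cdot Y}|A| \leq |A_0\cdot Y| - |H\cdot Y| + \lambda|H|.
\]

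Finally I would clean up using $H\supset G_Y\ni 1$, so that $Y\subset H\cdot Y$ and hence $|Y|\leq |H\cdot Y|$; substituting $-|H\cdot Y|\leq -|Y|$ into the last display gives
\[
\lambda\max_{A\subset G\mid A\cdot Y = A_0\cdot Y}|A| + |Y| \leq \lambda|H| + |A_0\cdot Y|,
\]
which is the claimed inequality. The only point requiring a word of care is that the maximum on the left is genuinely attained rather than being a supremum over an empty or unbounded family: the family is nonempty (it contains $A_0$ itself) and bounded, because every $A$ with $A\cdot Y = A_0\cdot Y$ must be a union of left $G_Y$-cosets sitting inside a fixed finite set, so its cardinality cannot exceed a bound depending only on $A_0\cdot Y$ and $Y$. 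I expect no serious obstacle here; the corollary is essentially a repackaging of the atom inequality \eqref{Hami} specialised to subsets with a prescribed product set.
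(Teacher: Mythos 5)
Your proof is correct and follows essentially the route the paper intends: the corollary appears there without proof as an immediate consequence of Theorem~\ref{Th_Hami}, namely applying $c_{Y}(A)\geq c_{Y}(H)\geq\left\vert Y\right\vert -\lambda\left\vert H\right\vert$ to each $A$ with $A\cdot Y=A_{0}\cdot Y$ and maximising over such $A$, exactly as you do. One minor slip in your attainment argument: such an $A$ need not itself be a union of left $G_{Y}$-cosets (only the maximal one, $\{g\in G\mid g\cdot Y\subset A_{0}\cdot Y\}$, is); but boundedness, hence attainment of the maximum, still holds, since $\mu>0$ forces $\left\vert A\right\vert \leq\left\vert A_{0}\cdot Y\right\vert /\mu$ for every member of the (nonempty) family, so the cardinalities form a bounded set of positive integers.
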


\subsubsection{A generalisation of a theorem of Petridis and Tao}

In another direction, we can also get the following analogue of a theorem by
Tao and Petridis (see \cite[Theorem 4.1]{Tao}) in our group action context.

\begin{thm}
\label{Th_Tao}Consider $A$ a nonempty finite subset of $G$ and $Y$ a finite
subset of $X$. Assume that
\[
\left\vert A\cdot Y\right\vert \leq\alpha\left\vert A\right\vert
\]
with $\alpha\in\mathbb{R}_{\geq0}$. Then, there exists a nonempty subset $B$
in $A$ such that%
\[
\left\vert CB\cdot Y\right\vert \leq\alpha\left\vert CB\right\vert
\]
for any finite subset $C$ of $G$.
\end{thm}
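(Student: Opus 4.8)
The plan is to use the submodular function $c_Y(A) = |A \cdot Y| - \lambda |A|$ from Subsection \ref{sssec-cy-gammay}, together with the theory of fragments and atoms developed in Section \ref{sec-fragment-atom}. The key observation is that the hypothesis $|A \cdot Y| \leq \alpha |A|$ says precisely that $c_Y(A) \leq 0$ when we take $\lambda = \alpha$. So first I would set $\lambda = \alpha$ and consider the $G$-invariant submodular function $c_Y$ on $\mathcal{P}_{\mathrm{fin}}(G)$. Since $c_Y(A) \leq 0$ for our given set $A$, the minimum $m = \min_{B \neq \emptyset} c_Y(B)$ (if it exists) is at most $0$, and the desired conclusion $|CB \cdot Y| \leq \alpha |CB|$ is equivalent to $c_Y(CB) \leq 0$.

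The heart of the argument is to produce the subset $B$ as a \emph{fragment} (or atom) for $c_Y$ and then to show that left-translation-type stability passes through: I want $c_Y(CB) \leq c_Y(B) = m \leq 0$ for every finite $C$. The plan is to take $B$ to be an atom for $c_Y$; by Proposition \ref{Prop_H}, the atoms are exactly the left cosets $gH$ of a finite subgroup $H$, and in particular $c_Y(gB) = m$ for all $g$. The crucial inequality to establish is that for any finite $C = \{c_1, \ldots, c_t\}$, one has $c_Y(CB) \leq m$. Writing $CB = \bigcup_i c_i B$, I would argue by induction on $|C|$: submodularity gives $c_Y(U \cup c_{t}B) + c_Y(U \cap c_{t} B) \leq c_Y(U) + c_Y(c_t B)$ where $U = (c_1 B \cup \cdots \cup c_{t-1}B)$, and since $c_Y(c_t B) = m$ by $G$-invariance while $c_Y(U \cap c_t B) \geq m$ (as $m$ is the minimum over nonempty sets, provided the intersection is nonempty), this forces $c_Y(U \cup c_t B) \leq c_Y(U) \leq m$ by the inductive hypothesis.

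The main obstacle I anticipate is twofold. First, one must ensure $m$ exists and equals the relevant minimum: the function $c_Y$ takes values in a discrete set and is bounded below when $\alpha \geq \mu$ (otherwise $c_Y$ would be unbounded below), and the case $|A \cdot Y| \leq \alpha |A|$ with $\alpha < \mu$ cannot occur for nonempty $A$, so in fact $\alpha \geq \mu$ and the setup of Theorem \ref{Th_Hami} applies to guarantee an atom. Second, and more delicately, the induction step requires the intersection $U \cap c_t B$ to be nonempty so that $c_Y(U \cap c_t B) \geq m$ holds; when the intersection is empty, submodularity degenerates to $c_Y(U \cup c_t B) \leq c_Y(U) + c_Y(c_t B) - c_Y(\emptyset)$, so I would need to verify a sensible convention or handle $c_Y(\emptyset) = 0$ carefully, noting that $|\emptyset \cdot Y| = 0$. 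Managing this empty-intersection case cleanly, and confirming that the fragment structure (rather than requiring a full atom) suffices for the telescoping, is where the real care lies.

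Finally, once $c_Y(CB) \leq m \leq 0$ is established for all finite $C$, I would simply unwind the definition: $c_Y(CB) \leq 0$ means $|CB \cdot Y| - \alpha|CB| \leq 0$, which is exactly $|CB \cdot Y| \leq \alpha |CB|$, completing the proof. The choice $B = H$, the atom containing $1$, or more robustly a suitable fragment of $c_Y$ contained in $A$, yields a nonempty subset with the required property; I would take care to produce $B$ as a subset of the given $A$ rather than an arbitrary atom, which may require intersecting $A$ with an atom or invoking that some fragment lies inside $A$.
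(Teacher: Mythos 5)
Your overall strategy (the submodular function $c_Y$, $G$-invariance, and a telescoping induction over the elements of $C$) is the right shape, but the way you set it up has two genuine gaps, both stemming from the choice $\lambda=\alpha$ and the appeal to the global atom theory of Proposition \ref{Prop_H} and Theorem \ref{Th_Hami}. First, with $\lambda=\alpha$ the minimum $m=\min_{S\neq\emptyset}c_Y(S)$ over all of $\mathcal{P}_{\mathrm{fin}}(G)$ generally does not exist: your claim that $c_Y$ is bounded below when $\alpha\geq\mu$ is backwards ($c_Y\geq 0$ holds when $\lambda\leq\mu$, and Theorem \ref{Th_Hami} accordingly requires $\lambda\in[0,\mu]$). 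Since the hypothesis forces $\alpha\geq|A\cdot Y|/|A|\geq\mu$, you are precisely in the regime where boundedness can fail. Concretely, take $G=X=\mathbb{Z}$ acting by translation, $Y=\{0,1\}$ and $A=\{0,\dots,n-1\}$, so that $\alpha=(n+1)/n>1=\mu$: then $c_Y(\{0,\dots,N-1\})=(1-\alpha)N+1\to-\infty$, so there is no minimum, no fragment and no atom, and your induction has nothing to start from. Second, even when an atom exists, the theorem demands $B\subset A$, and a global atom (a coset of a finite subgroup, by Proposition \ref{Prop_H}) has no reason to meet $A$, let alone lie inside it; intersecting it with $A$ destroys exactly the minimality property your induction uses. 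Your closing remark acknowledges this difficulty but does not resolve it.

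The paper repairs both problems at once with Petridis's localisation trick, which is the idea your proposal is missing: minimise the ratio $|C\cdot Y|/|C|$ not over all of $\mathcal{P}_{\mathrm{fin}}(G)$ but over the nonempty subsets $C\subset A$ --- a finite collection, so the minimum $\mu_A$ trivially exists --- and take $B\subset A$ to be a minimiser and $\lambda=\mu_A$. Then $c_Y(B)=0$ and $c_Y\geq 0$ on all of $\mathcal{P}(A)$ (including $c_Y(\emptyset)=0$), which is exactly what the telescoping needs: in the submodular inequality $c_Y(B\cup g^{-1}S)+c_Y(B\cap g^{-1}S)\leq c_Y(B)+c_Y(g^{-1}S)=c_Y(S)$, the intersection $B\cap g^{-1}S$ is a subset of $A$, so its $c_Y$-value is nonnegative whether or not it is empty, giving $c_Y(gB\cup S)\leq c_Y(S)$ with no case distinction and no atom theory at all. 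Iterating over the elements of $C$ and taking $S=\emptyset$ yields $|CB\cdot Y|\leq\mu_A|CB|\leq\alpha|CB|$, since $\mu_A\leq|A\cdot Y|/|A|\leq\alpha$. If you replace your global minimisation by this minimisation inside $A$, the rest of your outline goes through essentially verbatim.
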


\begin{proof}
Define the map $q_{A,Y}$ such that%
\[
q_{A,Y}:\left\{
\begin{array}
[c]{l}%
\mathcal{P}(A)\setminus\{\emptyset\}\rightarrow\mathbb{Q}_{>0}\\
C\longmapsto\frac{\left\vert C\cdot Y\right\vert }{\left\vert C\right\vert }%
\end{array}
\right.
\]
and set its minimum $\mu$ (which indeed exists since $\mathcal{P}(A)$ is finite).
Let $B\subset A$ such that $\mu=\frac{\left\vert B\cdot Y\right\vert
}{\left\vert B\right\vert }$. Now consider the function $c_{Y}$ defined
on $\mathcal{P}_{\mathrm{fin}}(G)$ by $c_{Y}(C)=\left\vert C\cdot Y\right\vert
-\mu\left\vert C\right\vert $. We have seen that he function $c_{Y}$ is
submodular and $G$-invariant. We also have here $c_{Y}(B)=0$ and for any
$C\subset A$ we get $c_{Y}(C)\geq c_{Y}(B)=0$. Nevertheless, $c_{Y}$ may not be
nonnegative on $\mathcal{P}_{\mathrm{fin}}(G)$ in general.\ For any
nonempty finite subset $S$ of $G$ and any $g\in G$, we can write%
\[
c_{Y}(B\cup g^{-1}S)+c_{Y}(B\cap g^{-1}S)\leq c_{Y}(B)+c_{Y}(g^{-1}S)=
c_{Y}(S)
\]
because $c_{Y}(B)=0$ and $c_{Y}(g^{-1}S)=c_{Y}(S)$. We also have $c_{Y}(B\cap
g^{-1}S)\geq0$ because $B\cap g^{-1}S\subset B\subset A$ which implies that
$c_{Y}(B\cup g^{-1}S)\leq c_{Y}(S)$ for any $g\in G$ and any $S\in
\mathcal{P}_{\mathrm{fin}}(G)$. By $G$-invariance, this gives
\begin{equation}
c_{Y}(gB\cup S)\leq c_{Y}(S) \label{ineqcY}%
\end{equation}
for any $g\in G$ and any $S\in\mathcal{P}_{\mathrm{fin}}(G)$.
Now, let us consider a subset $C$ of $G$ such that $C=\{g_{1},g_{2}%
,\ldots,g_{m}\}$ and $C^{\flat}=\{g_{1},g_{2},\ldots,g_{m-1}\}$.\ We get for
any $S^{\prime}\in\mathcal{P}_{\mathrm{fin}}(G)$%
\[
c_{Y}(CB\cup S^{\prime})=c_{Y}(g_{m}B\cup(C^{\flat}B\cup S^{\prime}))\leq
c_{Y}(C^{\flat}B\cup S^{\prime})
\]
by applying (\ref{ineqcY}) with $g=g_{m}$ and $S=C^{\flat}B\cup S^{\prime}$.
By an easy induction on $m$ we finally obtain%
\[
c_{Y}(CB\cup S^{\prime})\leq c_{Y}(S^{\prime})
\]
for any $S^{\prime}\in\mathcal{P}_{\mathrm{fin}}(G)$. In particular for
$S^{\prime}=\emptyset$, we get%
\[
c_{Y}(CB)\leq0\Longleftrightarrow\left\vert CB\cdot Y\right\vert
-\mu\left\vert CB\right\vert \leq0\Longleftrightarrow\left\vert CB\cdot
Y\right\vert \leq\mu\left\vert CB\right\vert
\]
since $c_{Y}(\emptyset)=0$. We conclude by observing that $\mu=\min_{C\subset
A,C\neq\emptyset}\frac{\left\vert C\cdot Y\right\vert }{\left\vert
C\right\vert }\leq\frac{\left\vert A\cdot Y\right\vert }{\left\vert
A\right\vert }\leq\alpha$.
\end{proof}

\subsubsection{A generalisation of a theorem of Tao}

We can also use Theorem \ref{Th_Hami} to generalise the previous results and
obtain the following theorem which is also a generalisation of~\cite[Theorem
1.2]{Tao}.

\begin{thm}
Consider a discrete group $G$ acting on $X$. Let $A,Y$ be nonempty finite
subsets respectively of $G$ and $X$ such that $\left\vert A\right\vert
\geq\left\vert Y\right\vert $.\ Assume that%
\[
\mu=\inf_{S\neq\emptyset\in\mathcal{P}_{\mathrm{fin}}(G)}\frac{\left\vert
S\cdot Y\right\vert }{\left\vert S\right\vert }>0\text{ and there exists }
\varepsilon> 0 \text{ such that } \left\vert A\cdot Y\right\vert
\leq(2-\varepsilon)\mu\left\vert Y\right\vert .
\]
Then, there exists a finite subgroup $H$ of $G$ such that $Y$ is contained in
the disjoint union $H\cdot Y$ of $H$-orbits with%
\[
\left\vert H\right\vert \leq(\frac{2}{\varepsilon}-1)\left\vert Y\right\vert
\text{ and }\left\vert H\cdot Y\right\vert \leq\mu(\frac{2}{\varepsilon
}-1)\left\vert Y\right\vert .
\]

\end{thm}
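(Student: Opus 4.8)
I want to apply Theorem~\ref{Th_Hami} to extract a finite subgroup $H$ and then control its size using the small-doubling hypothesis $|A\cdot Y|\leq(2-\varepsilon)\mu|Y|$. The plan is to choose the parameter $\lambda$ in the submodular function $c_Y$ judiciously so that the atom $H$ produced by Theorem~\ref{Th_Hami} satisfies the two desired bounds. Since $\mu>0$ by hypothesis, Theorem~\ref{Th_Hami} applies for every $\lambda\in[0,\mu]$ and yields a finite subgroup $H\supseteq G_Y$ with
\[
c_Y(A)\geq c_Y(H)\geq |Y|-\lambda|H|
\]
for every finite $A\subset G$. The key is that the left inequality, applied to our given set $A$, reads $|A\cdot Y|-\lambda|A|\geq |H\cdot Y|-\lambda|H|$, and this is what ties the hypothesis on $A$ to the size of $H$.

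\textbf{Choosing $\lambda$ and deriving the bound on $|H|$.} I would take $\lambda=\mu$ (the extreme admissible value), so that $c_Y(A)=|A\cdot Y|-\mu|A|$. With this choice the chain above gives, on the one hand, $|A\cdot Y|-\mu|A|\geq |H\cdot Y|-\mu|H|\geq |Y|-\mu|H|$. Rearranging the outer inequality yields
\[
\mu|H|\geq |Y|-|A\cdot Y|+\mu|A|\geq \mu|A|-|A\cdot Y|+|Y|.
\]
Now I feed in the two hypotheses $|A|\geq|Y|$ and $|A\cdot Y|\leq(2-\varepsilon)\mu|Y|$. Using $\mu|A|\geq\mu|Y|$ and the doubling bound, the right-hand side is at least $\mu|Y|-(2-\varepsilon)\mu|Y|+|Y| = |Y|-(1-\varepsilon)\mu|Y|$; one must be slightly careful because $\mu$ need not equal $1$, so the cleanest route is instead to bound $|H\cdot Y|$ first and then convert. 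Concretely, since $H$ is an atom for $c_Y$ and $1\in H$ gives $Y\subset H\cdot Y$, combining $c_Y(A)\geq c_Y(H)=|H\cdot Y|-\mu|H|$ with $c_Y(H)\geq|Y|-\mu|H|$ and the doubling hypothesis on the atom containing the relevant data should produce $\mu|H|\leq |H\cdot Y|-|Y|+\mu|H|$ controlled by $(2/\varepsilon-1)$. The target bound $|H|\leq(\tfrac{2}{\varepsilon}-1)|Y|$ is homogeneous of degree one in $|Y|$, which strongly suggests the arithmetic collapses to an elementary inequality once $\lambda$ is fixed correctly.

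\textbf{The orbit decomposition and the bound on $|H\cdot Y|$.} Because $H\supseteq G_Y$ is a subgroup acting on $X$, the set $H\cdot Y$ is a disjoint union of $H$-orbits, and in particular $Y\subseteq H\cdot Y$; this is exactly the structural conclusion asserted in the statement. For the second numerical bound, I would use $|H\cdot Y|=c_Y(H)+\mu|H|$ together with $c_Y(H)\leq c_Y(\{1\})=|Y|-\mu$ (testing the atom-minimality of $H$ against the singleton $\{1\}$) to write $|H\cdot Y|\leq |Y|-\mu+\mu|H|$, and then substitute the bound on $|H|$ just obtained. The factor $\mu(\tfrac{2}{\varepsilon}-1)|Y|$ appearing in the target is consistent with $|H\cdot Y|\approx \mu|H|$ when $Y$ meets each $H$-orbit in roughly one point, so I expect the final estimate to follow by pushing $|H|\leq(\tfrac{2}{\varepsilon}-1)|Y|$ through the relation $|H\cdot Y|\leq\mu|H|+(\text{lower order})$.

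\textbf{Main obstacle.} The delicate point is the bookkeeping with the constant $\mu$: unlike the classical Tao setting where one normalises so that $\mu=1$, here $\mu$ is an arbitrary positive rational, and the hypotheses mix $|Y|$ (cardinality) with $\mu|H|$ and $\mu|Y|$ (weighted quantities). Getting the clean factor $\tfrac{2}{\varepsilon}-1$ requires choosing $\lambda$ so that the $\mu$-weighted terms and the unweighted $|Y|$ terms combine correctly, and verifying that the admissibility constraint $\lambda\leq\mu$ is respected throughout. I anticipate that a single well-chosen value of $\lambda$ (likely $\lambda=\mu$, possibly a value slightly interior depending on how $\varepsilon$ enters) makes both inequalities fall out simultaneously, and the remaining work is routine rearrangement; the real content is the application of Theorem~\ref{Th_Hami} to manufacture the subgroup $H$ with $G_Y\subset H$.
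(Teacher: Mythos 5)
Your high-level plan is the paper's plan: apply Theorem~\ref{Th_Hami} (the atom machinery for $c_Y$) with a well-chosen $\lambda\in[0,\mu]$, get a finite subgroup $H\supset G_Y$ with $c_Y(A)\geq c_Y(H)$, and turn this into the two bounds. But there is a genuine gap in the execution, and it is exactly at the point you flag as ``routine rearrangement.'' The choice $\lambda=\mu$ that you advocate cannot work. The engine of the proof is the slack $\mu-\lambda>0$: by definition of $\mu$ one has $c_Y(S)=|S\cdot Y|-\lambda|S|\geq(\mu-\lambda)|S|$ for every nonempty finite $S$, and it is this inequality, applied to $S=H$, that converts an \emph{upper} bound on $c_Y(H)$ into an \emph{upper} bound on $|H|$. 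With $\lambda=\mu$ the slack is zero, you only know $c_Y\geq 0$, and no upper bound on $|H|$ can be extracted: indeed your own rearrangement produces $\mu|H|\geq|Y|-|A\cdot Y|+\mu|A|$, which is a \emph{lower} bound on $|H|$ (the wrong direction), and your later display $\mu|H|\leq|H\cdot Y|-|Y|+\mu|H|$ is equivalent to $|Y|\leq|H\cdot Y|$, which carries no information. The paper instead takes $\lambda=\mu(1-\tfrac{\varepsilon}{2})$, strictly interior, so that $c_Y(S)\geq\tfrac{\varepsilon\mu}{2}|S|$ for all $S$, while the hypotheses $|A\cdot Y|\leq(2-\varepsilon)\mu|Y|$ and $|A|\geq|Y|$ give $c_Y(H)\leq c_Y(A)\leq(2-\varepsilon)\mu|Y|-\mu(1-\tfrac{\varepsilon}{2})|Y|=\mu(1-\tfrac{\varepsilon}{2})|Y|$; combining, $|H|\leq\tfrac{2}{\varepsilon\mu}\,c_Y(H)\leq(\tfrac{2}{\varepsilon}-1)|Y|$. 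This is the single missing quantitative idea, and without it the first bound is not merely unproved but unreachable by your route.

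Your argument for the second bound also does not close. Testing atom-minimality against $\{1\}$ gives $c_Y(H)\leq|Y|-\lambda$, hence $|H\cdot Y|\leq|Y|-\lambda+\lambda|H|$; substituting $|H|\leq(\tfrac{2}{\varepsilon}-1)|Y|$ then yields at best
\[
|H\cdot Y|\;\leq\;\lambda\Bigl(\tfrac{2}{\varepsilon}-1\Bigr)|Y|+|Y|-\lambda,
\]
which overshoots the target $\mu(\tfrac{2}{\varepsilon}-1)|Y|$ by the term $|Y|-\lambda$; since $\mu$ (hence $\lambda$) can be much smaller than $1$ in the group action setting, this surplus is not absorbable. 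The paper's trick is different: it uses the definition of $\mu$ a second time, namely $\mu|H|\leq|H\cdot Y|$, to bootstrap. From $|H\cdot Y|-\mu(1-\tfrac{\varepsilon}{2})|H|=c_Y(H)\leq\mu(1-\tfrac{\varepsilon}{2})|Y|$ one gets $|H\cdot Y|\leq(1-\tfrac{\varepsilon}{2})|H\cdot Y|+\mu(1-\tfrac{\varepsilon}{2})|Y|$, and gathering the $|H\cdot Y|$ terms gives $|H\cdot Y|\leq\mu(\tfrac{2}{\varepsilon}-1)|Y|$ directly, with no need for the singleton test. In short: the structural part of your proposal (existence of $H\supset G_Y$, $Y\subset H\cdot Y$, union of $H$-orbits) is fine, but both numerical estimates require the interior choice $\lambda=\mu(1-\tfrac{\varepsilon}{2})$ and the self-improving use of $\mu|H|\leq|H\cdot Y|$, neither of which appears in your attempt.
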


\begin{proof}
Set $\lambda=\mu(1-\frac{\varepsilon}{2})$. By definition of $\mu$, we must have
\begin{equation}
c_{Y}(S)=\left\vert S\cdot Y\right\vert -\lambda\left\vert S\right\vert
\geq(\mu-\lambda)\left\vert S\right\vert \geq\mu\frac{\varepsilon}%
{2}\left\vert S\right\vert \geq0 \label{C_Y(S)}%
\end{equation}
for any finite subset $S\subset G$. From the hypotheses
$\left\vert A\cdot Y\right\vert \leq(2-\varepsilon)\mu\left\vert Y\right\vert
$ and $\left\vert A\right\vert \geq\left\vert Y\right\vert $, we obtain%
\begin{equation}
c_{Y}(A)=\left\vert A\cdot Y\right\vert -\mu(1-\frac{\varepsilon}%
{2})\left\vert A\right\vert \leq(2-\varepsilon)\mu\left\vert Y\right\vert
-\mu(1-\frac{\varepsilon}{2})\left\vert Y\right\vert =\mu(1-\frac{\varepsilon
}{2})\left\vert Y\right\vert . \label{C_Y(A)}%
\end{equation}
Let $H$ be the unique atom for $c_{Y}$ containing $1$. By Theorem
\ref{Th_Hami}$,$ we know that $H$ is a finite subgroup of $G$ and
$c_{Y}(H)\leq c_{Y}(A)$. We must have by (\ref{C_Y(S)}) and (\ref{C_Y(A)})%
\[
\left\vert H\right\vert \leq\frac{2}{\varepsilon\mu}c_{Y}(H)\leq\frac
{2}{\varepsilon\mu}c_{Y}(A)\leq(\frac{2}{\varepsilon}-1)\left\vert
Y\right\vert
\]
as desired.
We also get%
\[
c_{Y}(H)=\left\vert H\cdot Y\right\vert -\mu(1-\frac{\varepsilon}%
{2})\left\vert H\right\vert \leq c_{Y}(A)\leq\mu(1-\frac{\varepsilon}%
{2})\left\vert Y\right\vert .
\]
Therefore%
\[
\left\vert H\cdot Y\right\vert \leq\mu(1-\frac{\varepsilon}{2})\left\vert
H\right\vert +\mu(1-\frac{\varepsilon}{2})\left\vert Y\right\vert .
\]
Since $\mu=\inf_{S\in\mathcal{P}_{\mathrm{fin}}(G)\setminus\{\emptyset\}}%
\frac{\left\vert S\cdot Y\right\vert }{\left\vert S\right\vert }$ and
$H\in\mathcal{P}_{\mathrm{fin}}(G)\setminus\{\emptyset\}$, we should have
$\mu\left\vert H\right\vert \leq\left\vert H\cdot Y\right\vert $ which gives%
\[
\left\vert H\cdot Y\right\vert \leq(1-\frac{\varepsilon}{2})\left\vert H\cdot
Y\right\vert +\mu(1-\frac{\varepsilon}{2})\left\vert Y\right\vert .
\]
By gathering the occurrences of $\left\vert H\cdot Y\right\vert $, we finally
obtain the announced upper bound for $\left\vert H\cdot Y\right\vert $%
\[
\left\vert H\cdot Y\right\vert \leq\mu(\frac{2}{\varepsilon}-1)\left\vert
Y\right\vert .
\]
\end{proof}

\subsection{Group representation context and the submodular functions
$\gamma_{Y}$}

\label{ssec-gammay}

If we consider a representation $(\rho,V)$ of $G$ and a finite-dimensional $k
$-subspace $W=\langle Y\rangle$ in $V$, we can get an analogue of Theorem
\ref{Th_Hami} and of Theorem~\ref{Th_Tao}. The proof relies on the same
arguments and is thus omitted here.

\begin{thm}
\label{Th_Hami RT}Consider a finite-dimensional subspace $Y\subset V$ and set
\[
\mu=\inf_{A\neq\emptyset\in\mathcal{P}_{\mathrm{fin}}(G)}\frac{\dim(A\cdot
Y)}{\left\vert A\right\vert }.
\]
Then

\begin{itemize}
\item either $\mu=0$

\item or for any $\lambda\in\lbrack0,\mu]$, there exists a finite subgroup $H
$ of $G$ containing $G_{Y}$ such that%
\[
\dim(A\cdot Y)\geq\lambda\left\vert A\right\vert +\dim(H\cdot Y)-\lambda
\left\vert H\right\vert \geq\lambda\left\vert A\right\vert +\dim
(Y)-\lambda\left\vert H\right\vert
\]
for any subset $A$ in $G$.\ 
\end{itemize}
\end{thm}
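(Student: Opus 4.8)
The plan is to mirror the proof of Theorem~\ref{Th_Hami} almost verbatim, replacing cardinalities of subsets of $X$ by dimensions of subspaces of $V$ and invoking the submodularity of $\gamma_Y$ (established in \S\ref{sssec-cy-gammay}) in place of that of $c_Y$. The whole argument runs through the machinery of fragments and atoms for $G$-invariant submodular functions on $\mathcal{P}_{\mathrm{fin}}(G)$, namely Proposition~\ref{Prop_H}, so the key is simply to check that $\gamma_Y$ satisfies the same hypotheses as $c_Y$ did.

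First I would dispatch the dichotomy. If $\mu=0$ we are in the first bullet and there is nothing to prove. So assume $\mu>0$ and fix $\lambda\in[0,\mu]$; the case $\lambda=0$ is trivial (take $H=G_Y$, finite by the same remark as before). For $\lambda\in]0,\mu]$, pick $A_0$ attaining $\mu=\dim(A_0\cdot Y)/|A_0|$ and observe that for every finite $A\subset G$,
\[
\gamma_Y(A)=\dim(A\cdot Y)-\lambda|A|\geq(\mu-\lambda)|A|\geq0,
\]
so $\gamma_Y$ is a nonnegative $G$-invariant submodular function. By Proposition~\ref{Prop_H} there is a unique atom $H$ containing $1$, and $H$ is a finite subgroup of $G$ whose left cosets are exactly the atoms.

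Next I would check the inclusion $G_Y\subset H$ and the chain of inequalities. If some $g\in G_Y$ were not in $H$, then since $g\cdot Y\subset \langle Y\rangle$ forces $\dim((H\cup\{g\})\cdot Y)=\dim(H\cdot Y)$, we would get $\gamma_Y(H\cup\{g\})=\gamma_Y(H)-\lambda<\gamma_Y(H)$, contradicting minimality of $H$ among nonempty sets; hence $G_Y\subset H$. Because $H$ is an atom, $\gamma_Y(A)\geq\gamma_Y(H)$ for every finite $A$, which rearranges to the first inequality $\dim(A\cdot Y)\geq\lambda|A|+\dim(H\cdot Y)-\lambda|H|$. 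Finally, since $1\in H$ we have $Y\subset H\cdot Y$, so $\dim(H\cdot Y)\geq\dim(Y)$, giving the second inequality.

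The only genuinely new point—and the step I would watch most carefully—is that the \emph{linear} submodularity and the increasing property used implicitly must be the dimension versions from \S\ref{sssec-cy-gammay}, where $\langle(A\cup B)\cdot W\rangle=\langle A\cdot W\rangle+\langle B\cdot W\rangle$ replaces the union identity and Grassmann's formula replaces inclusion–exclusion. One should also confirm that the image of $\gamma_Y$ is a discrete subset of $\mathbb{R}$ so that the minimum defining $m$ is attained: this holds because $\dim(A\cdot Y)\in\mathbb{Z}_{\geq0}$ and $|A|\in\mathbb{Z}_{\geq0}$, so $\gamma_Y$ takes values in $\mathbb{Z}+\lambda\mathbb{Z}$, whence the nonnegative function attains its infimum. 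With these translations in hand, every remaining manipulation is formally identical to the cardinality case, which is precisely why the authors are entitled to omit the proof.
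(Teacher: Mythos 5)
Your proof is correct and is essentially the paper's own: the authors omit the proof of this theorem precisely because it \emph{relies on the same arguments} as Theorem~\ref{Th_Hami}, namely applying the atom machinery of Proposition~\ref{Prop_H} to the nonnegative (for $\lambda\leq\mu$), $G$-invariant submodular function $\gamma_Y$ of \S~\ref{sssec-cy-gammay}, and then verifying $G_Y\subset H$ and $Y\subset\langle H\cdot Y\rangle$ exactly as you do. The only blemish is your justification that the minimum is attained: $\mathbb{Z}+\lambda\mathbb{Z}$ is dense in $\mathbb{R}$ for irrational $\lambda$, so discreteness is not literally the right reason (for $\lambda<\mu$ one can instead use $\gamma_Y(A)\geq(\mu-\lambda)\left\vert A\right\vert$ to confine the search to finitely many values of $\left\vert A\right\vert$ and $\dim(A\cdot Y)$), but this is the same imprecision already present in the paper's blanket claim at the start of Section~\ref{sec-application}, so it does not distinguish your argument from theirs.
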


\begin{thm}
Consider $A$ a finite nonempty subset of $G$ and $Y$ a finite-dimensional
$k$-subspace of $V$. Assume that
\[
\dim\langle A\cdot Y\rangle\leq\alpha\left\vert A\right\vert
\]
with $\alpha\in\mathbb{R}_{\geq0}$. Then, there exists a nonempty subset $B$
in $A$ such that%
\[
\dim\langle CB\cdot Y\rangle\leq\alpha\left\vert CB\right\vert
\]
for any finite subset $C$ of $G$.
\end{thm}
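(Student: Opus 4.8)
The plan is to transpose the proof of Theorem~\ref{Th_Tao} almost verbatim, replacing the cardinality function $c_{Y}$ by its linear counterpart $\gamma_{Y}$ introduced in \S\ref{sssec-cy-gammay}. The only structural inputs needed are the submodularity and $G$-invariance of $\gamma_{Y}$, both already recorded there, together with the monotonicity of $\dim$ under inclusion of subspaces.

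First I would define, on the finite poset $\mathcal{P}(A)\setminus\{\emptyset\}$, the ratio $C\mapsto \dim\langle C\cdot Y\rangle/\left\vert C\right\vert$ and let $\mu$ be its minimum, which exists because $\mathcal{P}(A)$ is finite. I then choose a nonempty $B\subset A$ realising this minimum, so that $\mu=\dim\langle B\cdot Y\rangle/\left\vert B\right\vert$. Taking $\lambda=\mu$, the function $\gamma_{Y}(C)=\dim\langle C\cdot Y\rangle-\mu\left\vert C\right\vert$ is $G$-invariant and submodular on $\mathcal{P}_{\mathrm{fin}}(G)$ by \S\ref{sssec-cy-gammay}, and by the choice of $\mu$ it satisfies $\gamma_{Y}(B)=0$ while $\gamma_{Y}(C)\geq 0$ for every nonempty $C\subset A$.

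Next I would run the same submodular-absorption argument. For any nonempty finite $S\subset G$ and any $g\in G$, submodularity yields
\[
\gamma_{Y}(B\cup g^{-1}S)+\gamma_{Y}(B\cap g^{-1}S)\leq \gamma_{Y}(B)+\gamma_{Y}(g^{-1}S)=\gamma_{Y}(S),
\]
using $\gamma_{Y}(B)=0$ and $G$-invariance. Since $B\cap g^{-1}S\subset B\subset A$, the term $\gamma_{Y}(B\cap g^{-1}S)$ is nonnegative (it equals $\gamma_{Y}(\emptyset)=0$ when the intersection is empty, as $\langle\emptyset\cdot Y\rangle=\{0\}$), whence $\gamma_{Y}(B\cup g^{-1}S)\leq\gamma_{Y}(S)$; by $G$-invariance this rewrites as $\gamma_{Y}(gB\cup S)\leq\gamma_{Y}(S)$. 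An induction on $\left\vert C\right\vert$, absorbing the elements of $C$ one at a time exactly as in Theorem~\ref{Th_Tao}, then gives $\gamma_{Y}(CB\cup S')\leq\gamma_{Y}(S')$ for every finite $S'$. Taking $S'=\emptyset$ produces $\gamma_{Y}(CB)\leq 0$, i.e. $\dim\langle CB\cdot Y\rangle\leq\mu\left\vert CB\right\vert$, and finally $\mu\leq \dim\langle A\cdot Y\rangle/\left\vert A\right\vert\leq\alpha$ closes the argument.

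Because every ingredient is already in place, I do not anticipate a genuine obstacle; the proof is essentially a dictionary translation. The two points deserving care are the conventions at the empty set ($\langle\emptyset\cdot Y\rangle=\{0\}$, so $\gamma_{Y}(\emptyset)=0$) and the direction of the submodular inequality for $\gamma_{Y}$, which rests on $\langle(C\cap D)\cdot Y\rangle\subset\langle C\cdot Y\rangle\cap\langle D\cdot Y\rangle$, the equality $\langle(C\cup D)\cdot Y\rangle=\langle C\cdot Y\rangle+\langle D\cdot Y\rangle$, and Grassmann's formula; these replace the set-theoretic relation $(C\cap D)\cdot Y\subset(C\cdot Y)\cap(D\cdot Y)$ used in the cardinality version.
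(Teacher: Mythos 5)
Your proof is correct and is exactly what the paper intends: the paper omits the proof of this theorem, remarking only that it ``relies on the same arguments'' as Theorem~\ref{Th_Tao}, and your dictionary translation with $\gamma_{Y}$ in place of $c_{Y}$ is precisely that argument. The two points you flag for care --- the convention $\gamma_{Y}(\emptyset)=0$ when $B\cap g^{-1}S=\emptyset$, and the submodularity of $\gamma_{Y}$ via $\langle(C\cup D)\cdot Y\rangle=\langle C\cdot Y\rangle+\langle D\cdot Y\rangle$, $\langle(C\cap D)\cdot Y\rangle\subset\langle C\cdot Y\rangle\cap\langle D\cdot Y\rangle$ and Grassmann's formula --- are indeed the only places where the translation is not purely mechanical, and you handle both correctly.
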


\subsection{Group action context and submodular functions $d_{A}$}

\label{ssec-dA}

\subsubsection{A generalisation of a theorem of Petridis and Tao}

\label{sssec_dA}Recall that for any fixed nonempty finite subset $A$ in $G$
and any $\lambda\geq0$, the submodular function $d_{A}$ is defined on
$\mathcal{P}_{\mathrm{fin}}(X)$ by $d_{A}(Y)=\left\vert A\cdot Y\right\vert
-\lambda\left\vert Y\right\vert $. Observe that the function $d_{A}$ is not
left invariant in general as defined but this is nevertheless the case when
$G$ is Abelian (see~\S ~\ref{sssec-dA}). The function $d_{A}$ is not
nonnegative for any $\lambda\geq0$ but this becomes true when $\lambda
\in\lbrack0,1]$ because we have for any non empty subset $A\subset G$ and any
$Y\subset X$ the inequality $\left\vert A\cdot Y\right\vert \geq\left\vert
Y\right\vert \geq\lambda\left\vert Y\right\vert $.

We get the following theorem which generalises~\cite[Theorem 4.1]{Tao}. It has
to be compared with Theorem~\ref{Th_Tao}.

\begin{thm}
\label{Th_Taod}Assume $G$ is Abelian. Consider $A$ a non empty finite subset
of $G$ and $Y$ a non empty finite subset of $X$. Assume that
\[
\left\vert A\cdot Y\right\vert \leq\alpha\left\vert Y\right\vert
\]
with $\alpha\in\mathbb{R}_{\geq0}$. Then, there exists a nonempty subset $Z$
in $Y$ such that%
\[
\left\vert AC\cdot Z\right\vert \leq\alpha\left\vert C\cdot Z\right\vert
\]
for any finite subset $C$ of $G$.
\end{thm}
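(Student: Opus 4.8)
The plan is to run the proof of Theorem~\ref{Th_Tao} in the dual setting, replacing the function $c_Y$ on $\mathcal{P}_{\mathrm{fin}}(G)$ by the function $d_A$ on $\mathcal{P}_{\mathrm{fin}}(X)$ from \S\ref{sssec-dA}. First I would minimise the relevant ratio over nonempty subsets of $Y$: set
\[
\nu=\min_{\emptyset\neq Z\subset Y}\frac{\left\vert A\cdot Z\right\vert}{\left\vert Z\right\vert},
\]
which exists because $\mathcal{P}(Y)$ is finite, and fix a nonempty $Z\subset Y$ realising it. Taking $\lambda=\nu$ in the definition of $d_A$, so that $d_A(T)=\left\vert A\cdot T\right\vert-\nu\left\vert T\right\vert$, I get $d_A(Z)=0$ and, by minimality of $\nu$, $d_A(W)\geq 0$ for every $W\subset Y$ (with $d_A(\emptyset)=0$). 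The submodularity of $d_A$ is already established in \S\ref{sssec-dA}, and since $G$ is Abelian the function $d_A$ is $G$-invariant, as observed there. \emph{This invariance is the only place where the Abelian hypothesis enters}, and it is exactly what fails in the non-Abelian case, as the $\mathfrak{S}_5$ example in \S\ref{sssec-dA} shows; this is why the hypothesis cannot be dropped.

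The heart of the argument is the inequality
\[
d_A(g\cdot Z\cup T)\leq d_A(T)\qquad\text{for all }g\in G\text{ and }T\in\mathcal{P}_{\mathrm{fin}}(X).
\]
To obtain it I would apply submodularity to $Z$ and $g^{-1}\cdot T$, which gives
\[
d_A(Z\cup g^{-1}\cdot T)+d_A(Z\cap g^{-1}\cdot T)\leq d_A(Z)+d_A(g^{-1}\cdot T)=d_A(T),
\]
where I used $d_A(Z)=0$ together with the $G$-invariance $d_A(g^{-1}\cdot T)=d_A(T)$. Since $Z\cap g^{-1}\cdot T\subset Z\subset Y$, the second term on the left is nonnegative, whence $d_A(Z\cup g^{-1}\cdot T)\leq d_A(T)$; applying $G$-invariance once more (acting by $g$, using $g\cdot(Z\cup g^{-1}\cdot T)=g\cdot Z\cup T$) yields the displayed inequality.

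Finally I would iterate this over the elements of a finite subset $C=\{g_1,\dots,g_m\}$ of $G$ by induction on $m$, taking at each step $T=C^\flat\cdot Z\cup T'$ with $C^\flat=\{g_1,\dots,g_{m-1}\}$, to conclude that $d_A(C\cdot Z\cup T')\leq d_A(T')$ for every $T'\in\mathcal{P}_{\mathrm{fin}}(X)$. Specialising to $T'=\emptyset$ gives $d_A(C\cdot Z)\leq 0$, that is $\left\vert A\cdot(C\cdot Z)\right\vert\leq\nu\left\vert C\cdot Z\right\vert$. Since $A\cdot(C\cdot Z)=AC\cdot Z$ and $\nu\leq\frac{\left\vert A\cdot Y\right\vert}{\left\vert Y\right\vert}\leq\alpha$, this is exactly the sought bound $\left\vert AC\cdot Z\right\vert\leq\alpha\left\vert C\cdot Z\right\vert$. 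Everything apart from the $G$-invariance of $d_A$ is the verbatim dual of the proof of Theorem~\ref{Th_Tao}, so the only genuine obstacle is the invariance step, which is precisely the ingredient supplied by the Abelian hypothesis.
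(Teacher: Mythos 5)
Your proof is correct and follows essentially the same route as the paper's: you minimise the same ratio $\left\vert A\cdot Z\right\vert/\left\vert Z\right\vert$ over nonempty subsets of $Y$, use submodularity plus the $G$-invariance of $d_A$ (the sole point where the Abelian hypothesis is used, exactly as in the paper) to derive $d_A(g\cdot Z\cup T)\leq d_A(T)$, and then iterate over the elements of $C$ before specialising to $T'=\emptyset$. The paper's proof of Theorem~\ref{Th_Taod} is line-for-line the same argument, so there is nothing to add.
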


\begin{proof}
Define the map $q_{Y,A}$ such that%
\[
q_{Y,A}:\left\{
\begin{array}
[c]{l}%
\mathcal{P}_{\mathrm{fin}}(Y)\setminus\{\emptyset\}\rightarrow\mathbb{Q}%
_{>0}\\
S\longmapsto\frac{\left\vert A\cdot S\right\vert }{\left\vert S\right\vert }%
\end{array}
\right.
\]
and its minimum $\mu$. Let $Z\subset Y$ such that $\mu=\frac{\left\vert A\cdot
Z\right\vert }{\left\vert Z\right\vert }$. Now consider the function
$d_{A}$ defined on $\mathcal{P}_{\mathrm{fin}}(X)$ by $d_{A}(S)=\left\vert
A\cdot S\right\vert -\mu\left\vert S\right\vert $. The function $d_{A}$ is
submodular and $G$-invariant because $G$ is Abelian. We have $d_{A}(Z)=0$ and
for any $S\subset Y$ we get $d_{A}(S)\geq0$. As in the proof of Theorem~\ref{Th_Tao}, the function $d_{A}$
is not nonnegative on $\mathcal{P}_{\mathrm{fin}}(X)$ in general.\ For any
nonempty finite subset $S$ of $X$ and any $g\in G$, we can write%
\[
d_{A}(Z\cup g^{-1}S)+d_{A}(Z\cap g^{-1}S)\leq d_{A}(Z)+d_{A}(g^{-1}S)\leq
d_{A}(S)
\]
because $d_{A}(Z)=0$ and $d_{A}(g^{-1}S)=d_{A}(S)$. We also have $d_{A}(Z\cap
g^{-1}S)\geq0$ because $Z\cap g^{-1}S\subset Z\subset Y$ which implies that
$d_{A}(Z\cup g^{-1}S)\leq d_{A}(S)$ for any $g\in G$ and any $S\in
\mathcal{P}_{\mathrm{fin}}(X)$. By $G$-invariance, this gives
\begin{equation}
d_{A}(gZ\cup S)\leq d_{A}(S) \label{ineqcA}%
\end{equation}
for any $g\in G$ and any $S\in\mathcal{P}_{\mathrm{fin}}(X)$.
Now, let us consider a subset $C$ of $G$ such that $C=\{g_{1},g_{2}%
,\ldots,g_{m}\}$ and $C^{\flat}=\{g_{1},g_{2},\ldots,g_{m-1}\}$.\ We get for
any $S^{\prime}\in\mathcal{P}_{\mathrm{fin}}(X)$%
\[
d_{A}((C\cdot Z)\cup S^{\prime})=d_{A}((g_{m}\cdot Z)\cup((C^{\flat}\cdot
Z)\cup S^{\prime}))\leq d_{A}((C^{\flat}\cdot Z)\cup S^{\prime}))
\]
by applying (\ref{ineqcA}) with $g=g_{m}$ and $S=(C^{\flat}\cdot Z)\cup
S^{\prime}$. By induction on $m$ we finally obtain%
\[
d_{A}((C\cdot Z)\cup S^{\prime})\leq d_{A}(S^{\prime})
\]
for any $S^{\prime}\in\mathcal{P}_{\mathrm{fin}}(X)$. In particular for
$S^{\prime}=\emptyset$, we get since $d_{A}(\emptyset)=0$%
\[
d_{A}(C\cdot Z)\leq0\Longleftrightarrow\left\vert A\cdot(C\cdot Z)\right\vert
-\mu\left\vert C\cdot Z\right\vert \leq0\Longleftrightarrow\left\vert AC\cdot
Z\right\vert \leq\mu\left\vert C\cdot Z\right\vert .
\]
We conclude by observing that $\mu=\min_{S\subset Y,S\neq\emptyset}%
\frac{\left\vert A\cdot S\right\vert }{\left\vert S\right\vert }\leq
\frac{\left\vert A\cdot Y\right\vert }{|Y|}\leq\alpha$.
\end{proof}

\bigskip

Under the hypotheses Theorem~\ref{Th_Taod}, we get the following interesting corollary.

\begin{cor}
Assume $G$ is Abelian and $\left\vert A\cdot Y\right\vert \leq\alpha\left\vert
Y\right\vert $. Then, there exists a nonempty subset $Z$ in $Y$ such that for
any integer $n\geq1$ we have
\[
\left\vert A^{n}\cdot Z\right\vert \leq\alpha^{n}\left\vert Z\right\vert .
\]

\end{cor}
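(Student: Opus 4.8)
The plan is to apply Theorem~\ref{Th_Taod} once and then bootstrap by a straightforward induction on $n$. First I would invoke Theorem~\ref{Th_Taod} with the hypothesis $\left\vert A\cdot Y\right\vert \leq\alpha\left\vert Y\right\vert$ to produce a single nonempty subset $Z\subset Y$ satisfying
\[
\left\vert AC\cdot Z\right\vert \leq\alpha\left\vert C\cdot Z\right\vert
\]
for every finite subset $C$ of $G$. The decisive point is that this $Z$ is furnished independently of $C$, so the \emph{same} $Z$ will serve simultaneously for all values of $n$; this is precisely what makes an iteration possible.

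Next I would run an induction on $n\geq 1$. For the base case $n=1$, taking $C=\{1\}$ gives $\{1\}\cdot Z=Z$ and $A\{1\}=A$, so the inequality reads $\left\vert A\cdot Z\right\vert \leq\alpha\left\vert Z\right\vert$, as required. For the inductive step, assuming $\left\vert A^{n-1}\cdot Z\right\vert \leq\alpha^{n-1}\left\vert Z\right\vert$, I would apply the displayed inequality with the Minkowski power $C=A^{n-1}$. Since $G$ is a group acting on $X$, the action axiom $(gh)\cdot z=g\cdot(h\cdot z)$ yields the product-set identity $A\cdot(A^{n-1}\cdot Z)=(AA^{n-1})\cdot Z=A^{n}\cdot Z$, so that
\[
\left\vert A^{n}\cdot Z\right\vert =\left\vert AA^{n-1}\cdot Z\right\vert \leq\alpha\left\vert A^{n-1}\cdot Z\right\vert \leq\alpha\cdot\alpha^{n-1}\left\vert Z\right\vert =\alpha^{n}\left\vert Z\right\vert,
\]
which closes the induction.

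The only step that deserves a small check --- and the one I expect to be the sole (if minor) obstacle --- is the set-level identity $A^{n}\cdot Z=A\cdot(A^{n-1}\cdot Z)$, that is, the fact that iterating the action of $A$ agrees with acting by the Minkowski power $A^{n}$. This is immediate from associativity of the group action together with $AA^{n-1}=A^{n}$ in $G$, but it is exactly the mechanism that allows Theorem~\ref{Th_Taod} to be reapplied repeatedly with the parameter $\alpha$ left unchanged. I would also note that the Abelian hypothesis on $G$ is already consumed inside Theorem~\ref{Th_Taod} (it guarantees the $G$-invariance of $d_{A}$), so no further appeal to commutativity is needed at this stage.
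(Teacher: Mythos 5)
Your proposal is correct and follows essentially the same route as the paper's own proof: one application of Theorem~\ref{Th_Taod} to obtain a single $Z$ valid for all finite $C$, the base case via $C=\{1\}$, and the inductive step via $C=A^{n-1}$ combined with the identity $A\cdot(A^{n-1}\cdot Z)=A^{n}\cdot Z$. The only difference is cosmetic --- you make explicit the associativity check $AA^{n-1}\cdot Z = A\cdot(A^{n-1}\cdot Z)$ that the paper leaves implicit.
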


\begin{proof}
By applying Theorem~\ref{Th_Taod}, we get a subset $Z$ of $Y$ such that $\left\vert
AC\cdot Z\right\vert \leq\alpha\left\vert C\cdot Z\right\vert $ for any finite
subset $C$ of $G$. In particular, with $C=\{1\},$ this gives $\left\vert
A\cdot Z\right\vert \leq\alpha\left\vert Z\right\vert $, that is the corollary
for $n=1$. Consider an integer $n\geq2$ and assume by induction that we
have $\left\vert A^{n-1}\cdot Z\right\vert \leq\alpha^{n-1}\left\vert
Z\right\vert .$ We then get%
\[
\left\vert A^{n}\cdot Z\right\vert =\left\vert A\cdot A^{n-1}\cdot
Z\right\vert \leq\alpha\left\vert A^{n-1}\cdot Z\right\vert \leq\alpha
^{n}\left\vert Z\right\vert
\]
where the first inequality is obtained by applying Theorem~\ref{Th_Taod} with $C=A^{n-1}$
and the second one is the induction hypothesis.
\end{proof}

\subsubsection{Behavior of the atoms for the submodular function $d_{A}$}

The function $d_{A}$ defined on $\mathcal{P}_{\mathrm{fin}}(X)$ by
$d_{A}(Y)=\left\vert A\cdot Y\right\vert -\lambda\left\vert Y\right\vert $ is
submodular nonnegative for any $\lambda\in\lbrack0,1]$ and left invariant when
$G$ is Abelian (see \S \ \ref{sssec-dA}).\ In contrast to
Examples~\ref{ex-cut} and~\ref{ex-symgroup}, the corresponding atoms and cores
depend on $\lambda$ and on the definition of the action.\ Our goal in this
paragraph is to show that, roughly speaking, the cardinality of fragments is
bounded by $\left\vert A\right\vert $ for small values of $\lambda$ whereas
for values of $\lambda$ close to $1$ and when the action is free, the
cardinality of fragments become larger than $\left\vert A\right\vert $.

More precisely, we have the following result.

\begin{prop}
Let $G$ be a group acting on $X$ (we do not assume that $G$ is Abelian) and $A
\subset G$.

\begin{enumerate}
\item Assume that $\lambda<1/|A|$. Then every fragment $Y$ for $d_{A}$
verifies $|Y|\leq|A|$.

\item Assume that the action of $G$ on $X$ is free and $A\subset G$ is such
that $\left\vert X\right\vert \geq\left\vert A\right\vert $.

For every $\mu\leq1$ and $Y \subset X$ such that $|Y| < \mu|A|$, $Y$ is not a
fragment for $d_{A}$ for every $\lambda$ verifying
\[
0 \leq\frac{|X|-|A|}{|X|-\mu|A|}\leq\lambda\leq1
\]
In particular, when $\mu>1-\frac{1}{\left\vert A\right\vert }$, the fragments
are of cardinality at least $|A|$ for any function $d_{A}$ such that%
\[
\lambda\geq\frac{|X|-|A|}{|X|-\mu|A|}\geq\frac{|X|-|A|}{|X|-|A|+1}.
\]

\end{enumerate}
\end{prop}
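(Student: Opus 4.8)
The plan is to treat the two assertions separately, in each case comparing the value $d_A(Y)$ of a putative fragment with the value of $d_A$ on a well-chosen competitor, and exploiting two elementary bounds: $|A\cdot Y|\geq|Y|$, valid for every action since $a\cdot Y\subset A\cdot Y$ with $|a\cdot Y|=|Y|$ for any $a\in A$; and, under freeness, $|A\cdot Y|\geq|A|$. Throughout, recall that $d_A$ is nonnegative for $\lambda\in[0,1]$ with discrete image, so its minimum is attained and fragments exist, which makes all the comparisons below legitimate.

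For assertion 1, I would first record that for a single point $|A\cdot y|\leq|A|$, so that $d_A(\{y\})=|A\cdot y|-\lambda\leq|A|-\lambda$. Since a fragment $Y$ minimises $d_A$, we have $d_A(Y)\leq d_A(\{y\})\leq|A|-\lambda$; combining this with $|A\cdot Y|\geq|Y|$ gives $(1-\lambda)|Y|\leq|A|-\lambda$. It then remains to observe that if $\lambda<1/|A|$ (so $1-\lambda>0$) and $|Y|\geq|A|+1$, then $(1-\lambda)(|A|+1)\leq|A|-\lambda$ simplifies to $\lambda|A|\geq1$, contradicting $\lambda<1/|A|$. Hence $|Y|\leq|A|$. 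This is a one-line computation once the inequality $(1-\lambda)|Y|\leq|A|-\lambda$ is in hand, and it uses no freeness.

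For assertion 2, the two inputs from freeness are as follows. Choosing $y_0\in Y$ and using that $a\mapsto a\cdot y_0$ is injective (because $G_{y_0}=\{1\}$) gives $|A\cdot Y|\geq|A\cdot y_0|=|A|$; and since each map $x\mapsto a\cdot x$ is a bijection of $X$, we get $A\cdot X=X$, whence $d_A(X)=(1-\lambda)|X|$. I would then show that the full set $X$ beats $Y$, i.e. $d_A(X)<d_A(Y)$, which precludes $Y$ from being a fragment. Writing $d_A(Y)-d_A(X)=|A\cdot Y|-|X|+\lambda(|X|-|Y|)\geq|A|-|X|+\lambda(|X|-|Y|)$, the desired strict inequality reduces to $\lambda(|X|-|Y|)>|X|-|A|$.

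The core of the argument, and the step requiring the most care, is verifying this last inequality from the hypotheses while keeping it strict. From $|Y|<\mu|A|$ and $\mu\leq1\leq|X|/|A|$ I get $|X|-|Y|>|X|-\mu|A|>0$, and then chain $\lambda(|X|-|Y|)\geq\frac{|X|-|A|}{|X|-\mu|A|}(|X|-|Y|)>\frac{|X|-|A|}{|X|-\mu|A|}(|X|-\mu|A|)=|X|-|A|$, the two steps using respectively $\lambda\geq\frac{|X|-|A|}{|X|-\mu|A|}\geq0$ and the strict bound $|X|-|Y|>|X|-\mu|A|$ together with positivity of the coefficient. The subtlety is that the second step is strict precisely when $|X|>|A|$ makes the coefficient $\frac{|X|-|A|}{|X|-\mu|A|}$ positive, so I would isolate the degenerate case $|X|=|A|$ (where the threshold is $0$) as a separate, essentially trivial, check. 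Finally, the ``in particular'' clause is immediate: the main statement shows every fragment satisfies $|Y|\geq\mu|A|$, and $\mu>1-\frac1{|A|}$ means $\mu|A|>|A|-1$, so an integer $|Y|\geq\mu|A|$ forces $|Y|\geq|A|$; the displayed comparison $\frac{|X|-|A|}{|X|-\mu|A|}\geq\frac{|X|-|A|}{|X|-|A|+1}$ is just the observation that $|X|-\mu|A|<|X|-|A|+1$ together with $|X|-|A|\geq0$.
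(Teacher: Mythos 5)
Your proposal follows essentially the same route as the paper's proof. For part 1 you compare the fragment against a singleton using $|A\cdot Y|\geq |Y|$ and $d_A(\{y\})\leq |A|-\lambda$; the paper runs the identical computation, phrased as a contradiction from $|Y|\geq |A|+1$. For part 2 you show that $X$ beats $Y$, using freeness for $|A\cdot Y|\geq |A|$ and $d_A(X)=(1-\lambda)|X|$; the paper does exactly this, converting the threshold hypothesis via the equivalence $\lambda\geq\frac{|X|-|A|}{|X|-\mu|A|}\Longleftrightarrow |A|-\mu\lambda|A|\geq(1-\lambda)|X|$, where your version instead chains through $|X|-|Y|>|X|-\mu|A|$.

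The one point that needs attention is the degenerate case $|X|=|A|$, which you set aside as an ``essentially trivial'' check. It is not trivial: it is the only situation in which $\lambda=0$ is admitted by the hypothesis (the threshold vanishes there, assuming $\mu<1$), and in that corner the statement itself fails, so no check can close it. Indeed, a free action with $|X|=|A|$ forces $A=G$ acting simply transitively, hence $A\cdot Z=X$ for every nonempty $Z$; with $\lambda=0$ the function $d_A$ is then constant equal to $|X|$ on nonempty sets, so \emph{every} nonempty subset is a fragment, and as soon as $\mu|A|>1$ a singleton $Y$ contradicts the claim (take $G=X=\mathbb{Z}/3\mathbb{Z}$ with the translation action, $A=G$, $\mu=1/2$, $\lambda=0$: then $|Y|=1<3/2=\mu|A|$ and $Y$ is a fragment). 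You should know that the paper's own proof has the same blind spot: its strict inequality $|A|-\lambda|Y|>|A|-\mu\lambda|A|$ silently requires $\lambda>0$. So your argument is correct, and coincides with the paper's, in every case where the proposition is actually true, namely $|X|>|A|$ (where the threshold forces $\lambda>0$) and $|X|=|A|$ with $\lambda>0$; the remaining corner $|X|=|A|$, $\lambda=0$, $\mu|A|>1$ is a genuine counterexample to the statement, and your explicit isolation of the degenerate case has the merit of exposing it, even though your proposal mislabels it as routine.
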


\begin{proof} Assume that $\lambda<1/|A|$. If we assume $|Y|\geq|A|+1$, we get for
any $y\in Y$%
\[
|A\cdot Y|-\lambda|Y|\geq(1-\lambda)|Y|\geq(1-\lambda
)(|A|+1)=|A|-\lambda+1-\lambda\left\vert A\right\vert >|A|-\lambda\geq
d_{A}(\{y\})
\]
because $\left\vert A\cdot\{y\}\right\vert \leq\left\vert A\right\vert $. This
gives the contradiction $d_{A}(\{y\})<d_{A}(Y)$.
Let us now consider the situation of $2.$ The freeness of the action insures us that $|A\cdot Y|\geq \left\vert A\right\vert $.
Hence we get for the function $d_{A}$ corresponding to $\lambda$%
\[
d_{A}(Y)=|A\cdot Y|-\lambda|Y|\geq|A|-\lambda|Y|>|A|-\mu\lambda|A|.
\]
By observing that
\[
\lambda\geq\frac{|X|-|A|}{|X|-\mu|A|}\Longleftrightarrow|A|-\mu\lambda
|A|\geq(1-\lambda)|X|
\]
and $d_{A}(X)=(1-\lambda)|X|$, we get that $Y$ cannot be a fragment.
\end{proof}

Thus, atoms and fragments indeed strongly depend on $\lambda$ and are in
general not easy to determine explicitly.


\section{Other generalisations}

\label{sec-generelisation}

We may define submodular functions on a lattice $(S,\vee,\wedge)$ by the
inequalities $f(a\wedge b)+f(a\vee b)\leq f(a)+f(b)$ for every $a,b\in S$. In
particular, one can consider the lattice $(\mathcal{P}_{k\mathrm{fin}%
}(V),+,\cap)$ of finite-dimensional vector subspaces of a linear
representation $V$ of $G$. If $f$ is a submodular function such that
$m=\inf_{\{0\}\neq Y\in\mathcal{P}_{k\mathrm{fin}}(V)}f(Y)$ exists. We define
a fragment for $f$ as a $k$-vector subspace $W\in\mathcal{P}_{k\mathrm{fin}%
}(V)$ which is not reduced to $\{0\}$ and such that $f(W)=m$ and an atom for
$f$ as a fragment with minimal dimension. All the atoms have the same
dimension and we have a linear analogue of Lemma \ref{Lem_EmptyInter} whose
proof is similar.

\begin{lem}
\label{Lem_EmptyInterLinear} If $W_{1}$ and $W_{2}$ are two atoms of $f$ then
$W_{1}=W_{2}$ or $W_{1}\cap W_{2}=\{0\}$.
\end{lem}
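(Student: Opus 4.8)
The plan is to mimic exactly the proof of Lemma~\ref{Lem_EmptyInter}, replacing set-theoretic intersection and union by the lattice operations $\cap$ and $+$ on finite-dimensional subspaces, and replacing cardinality by dimension. The key structural fact making this work is that $f$ is submodular on the lattice $(\mathcal{P}_{k\mathrm{fin}}(V),+,\cap)$, which by definition means $f(W_1\cap W_2)+f(W_1+W_2)\leq f(W_1)+f(W_2)$, so the submodular inequality is available verbatim.

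First I would assume, toward the dichotomy, that $W_1\cap W_2\neq\{0\}$, and show this forces $W_1=W_2$. Since $W_1$ and $W_2$ are atoms, $f(W_1)=f(W_2)=m$, and submodularity gives
\[
f(W_1\cap W_2)+f(W_1+W_2)\leq f(W_1)+f(W_2)=2m.
\]
Because $W_1\cap W_2$ is a nonzero subspace and $W_1+W_2$ is a nonzero finite-dimensional subspace, the definition of $m$ as the infimum over nonzero elements of $\mathcal{P}_{k\mathrm{fin}}(V)$ yields $f(W_1\cap W_2)\geq m$ and $f(W_1+W_2)\geq m$. Combined with the displayed inequality, both must equal $m$; in particular $W_1\cap W_2$ is a fragment.

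Next I would exploit minimality of the dimension of an atom. Since $W_1\cap W_2\subset W_1$ we have $\dim(W_1\cap W_2)\leq\dim W_1$, and as $W_1\cap W_2$ is a fragment and $W_1$ is an atom (a fragment of minimal dimension), we must have $\dim(W_1\cap W_2)=\dim W_1$, hence $W_1\cap W_2=W_1$, i.e.\ $W_1\subset W_2$. Since $W_1$ and $W_2$ are atoms they share the same dimension, forcing $W_1=W_2$. The only point requiring a moment of care—and the closest thing to an obstacle—is checking that $W_1+W_2$ genuinely lies in $\mathcal{P}_{k\mathrm{fin}}(V)$ (it is finite-dimensional as a sum of two finite-dimensional subspaces) and is nonzero, so that the bound $f(W_1+W_2)\geq m$ legitimately applies; this is immediate here, so the proof is essentially a transcription of the set version with $\emptyset$ replaced by $\{0\}$.
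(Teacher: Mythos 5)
Your proof is correct and is exactly what the paper intends: the paper itself does not write out this proof but states that it is ``similar'' to that of Lemma~\ref{Lem_EmptyInter}, and your argument is precisely that transcription, with union replaced by sum of subspaces, cardinality by dimension, and $\emptyset$ by $\{0\}$. The one point you flag (that $W_1+W_2$ is a nonzero element of $\mathcal{P}_{k\mathrm{fin}}(V)$, so the bound $f(W_1+W_2)\geq m$ applies) is indeed the only detail worth checking, and you handle it correctly.
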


Fix $(\rho,V)$ a linear representation of an Abelian group $G$ and $A$ a non
empty subset of~$G$. The map $d_{A}$ of \S \ref{sssec-dA} can be adapted to a
$G$-invariant submodular map $\delta_{A}$
\[
\delta_{A}:\left\{
\begin{array}
[c]{l}%
\mathcal{P}_{\mathrm{kfin}}(V)\rightarrow\mathbb{R}\\
Y\longmapsto\left\vert A\cdot Y\right\vert -\lambda\dim Y
\end{array}
\right.
\]

Using $\gamma_{A}$ we obtain group representation versions of the results in
subsection~\ref{ssec-dA} when $(\rho,V)$ is a representation of $G$.

\begin{thm}
Assume $G$ is Abelian. Consider $A$ a non empty finite subset of $G$ and $Y$ a
finite-dimensional $k$-subspace of $V$. Assume that
\[
\dim(A\cdot Y)\leq\alpha\dim(Y)
\]
with $\alpha\in\mathbb{R}_{\geq0}$. Then, there exists a $k$-subspace
$Z\neq\{0\}$ in $Y$ such that%
\[
\dim(AC\cdot Z)\leq\alpha\dim(C\cdot Z)
\]
for any finite subset $C$ of $G$.
\end{thm}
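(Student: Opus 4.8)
The plan is to mirror, almost verbatim, the proof of Theorem~\ref{Th_Taod}, replacing the cardinality of finite sets by the dimension of finite-dimensional subspaces and the submodular function $d_A$ by its linear analogue $\delta_A$ defined on the lattice $(\mathcal{P}_{k\mathrm{fin}}(V),+,\cap)$. First I would set
\[
\mu=\min_{\{0\}\neq S\subset Y}\frac{\dim(A\cdot S)}{\dim(S)},
\]
the minimum being attained because the subspaces $S$ of the finite-dimensional space $Y$ form a finite-dimensional lattice and the ratio takes finitely many values. I pick a nonzero subspace $Z\subset Y$ realising this minimum, so that $\delta_A(Z)=\dim(A\cdot Z)-\mu\dim(Z)=0$, and I note $\delta_A(S)\geq0$ for every nonzero $S\subset Y$. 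The map $\delta_A$ is $G$-invariant since $G$ is Abelian, exactly as for $d_A$, because $\dim(A\cdot g\cdot S)=\dim(g\cdot A\cdot S)=\dim(A\cdot S)$ and $\dim(g\cdot S)=\dim(S)$.

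Next I would establish the key inequality
\begin{equation}
\delta_A(g\cdot Z + S)\leq \delta_A(S) \label{ineqdelta}
\end{equation}
for every $g\in G$ and every $S\in\mathcal{P}_{k\mathrm{fin}}(V)$. This follows from submodularity on the lattice: writing $\delta_A((Z+g^{-1}S)) + \delta_A(Z\cap g^{-1}S)\leq \delta_A(Z)+\delta_A(g^{-1}S)=\delta_A(S)$, using $\delta_A(Z)=0$ and $G$-invariance on the right, together with $\delta_A(Z\cap g^{-1}S)\geq0$ because $Z\cap g^{-1}S$ is a subspace of $Z\subset Y$. Applying $G$-invariance again converts $Z+g^{-1}S$ into $g\cdot Z + S$ and yields \eqref{ineqdelta}. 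Then, writing a finite subset $C=\{g_1,\ldots,g_m\}$ and peeling off one generator at a time, an induction on $m$ gives $\delta_A((C\cdot Z)+S')\leq\delta_A(S')$ for all $S'$; taking $S'=\{0\}$ and using $\delta_A(\{0\})=0$ produces $\dim(A\cdot(C\cdot Z))\leq\mu\dim(C\cdot Z)$, that is $\dim(AC\cdot Z)\leq\mu\dim(C\cdot Z)$. Finally $\mu\leq \dim(A\cdot Y)/\dim(Y)\leq\alpha$ closes the argument.

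The step requiring the most care is the transition from sets to subspaces in the lattice submodularity argument, namely confirming that $\delta_A$ really is submodular for the lattice operations $+$ and $\cap$. The crucial containments are $A\cdot(Y\cap Z)\subset (A\cdot Y)\cap(A\cdot Z)$ and $\langle A\cdot(Y+Z)\rangle=\langle A\cdot Y\rangle+\langle A\cdot Z\rangle$, which give submodularity of $Y\mapsto\dim(A\cdot Y)$; subtracting the modular map $\lambda\dim$ preserves submodularity. One must also check that intersecting inside the finite-dimensional space $Y$ behaves correctly, i.e.\ that $Z\cap g^{-1}S$ is genuinely a subspace of $Z$ (immediate) and that the minimum defining $\mu$ is attained, which follows since $\dim$ takes integer values bounded by $\dim Y$. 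Apart from this lattice bookkeeping, every step is a direct transcription of the proof of Theorem~\ref{Th_Taod}, so I expect no essential new difficulty beyond verifying that the Grassmann-type inequalities supply the submodularity that cardinality supplied in the set-theoretic case.
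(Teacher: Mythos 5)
Your proposal is correct and is exactly the argument the paper intends: the authors omit the proof precisely because it is the verbatim transcription of the proof of Theorem~\ref{Th_Taod} to the lattice $(\mathcal{P}_{k\mathrm{fin}}(V),+,\cap)$ with $d_A$ replaced by $\delta_A$, which is what you carry out. You even handle the one point needing extra care in the linear setting (the minimum defining $\mu$ is attained because the ratio $\dim(A\cdot S)/\dim(S)$ takes only finitely many values, even though $Y$ may have infinitely many subspaces), so nothing is missing.
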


\begin{cor}
Assume $G$ is Abelian and $\dim A\cdot Y \leq\alpha\dim Y$. Then, there exists
a $k$-subspace $Z\neq\{0\}$ in $\langle Y\rangle$ such that for any integer
$n\geq1$ we have
\[
\dim(A^{n}\cdot Z)\leq\alpha^{n}\dim Z.
\]

\end{cor}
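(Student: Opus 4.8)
The plan is to copy, almost verbatim, the proof of the set-theoretic corollary that follows Theorem~\ref{Th_Taod}, replacing every cardinality by the corresponding dimension and every nonempty subset by a nonzero subspace. First I would apply the preceding theorem (the representation analogue of Theorem~\ref{Th_Taod}) to the subspace $\langle Y\rangle$. Since $\dim(A\cdot\langle Y\rangle)=\dim(A\cdot Y)\leq\alpha\dim(Y)=\alpha\dim\langle Y\rangle$, the hypothesis of that theorem holds, and it produces a nonzero $k$-subspace $Z\subset\langle Y\rangle$ such that $\dim(AC\cdot Z)\leq\alpha\dim(C\cdot Z)$ for every finite subset $C$ of $G$. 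Specialising to $C=\{1\}$ gives $\dim(A\cdot Z)\leq\alpha\dim Z$, which is exactly the claimed inequality for $n=1$.

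The remaining work is an induction on $n$. Assuming $\dim(A^{n-1}\cdot Z)\leq\alpha^{n-1}\dim Z$, I would invoke the same theorem once more, now with $C=A^{n-1}$, which is again a finite subset of $G$ (the $(n-1)$-fold product set). This yields $\dim\bigl(A\cdot(A^{n-1}\cdot Z)\bigr)\leq\alpha\dim(A^{n-1}\cdot Z)$, and chaining with the induction hypothesis gives $\dim(A^n\cdot Z)\leq\alpha\dim(A^{n-1}\cdot Z)\leq\alpha^n\dim Z$, as desired.

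The single step that requires a word of justification --- and the only place where the linear setting differs from the set-theoretic one --- is the identification $A\cdot(A^{n-1}\cdot Z)=A^n\cdot Z$ at the level of spanned subspaces. This follows from the linearity of the action: for any $a\in G$ and any set of vectors $S$ one has $a\cdot\langle S\rangle=\langle a\cdot S\rangle$, so that $A\cdot\langle A^{n-1}\cdot Z\rangle=\langle A\cdot(A^{n-1}\cdot Z)\rangle=\langle A^n\cdot Z\rangle$. I do not expect any genuine obstacle here: no submodularity or minimisation argument is needed beyond what is already packaged into the preceding theorem, and the rest is a routine induction paralleling the cardinality version line by line.
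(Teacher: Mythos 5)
Your proposal is correct and takes essentially the same route as the paper: the linear corollary is proved exactly like the set-theoretic corollary following Theorem~\ref{Th_Taod}, by applying the preceding theorem with $C=\{1\}$ for the base case and $C=A^{n-1}$ for the inductive step. Your additional justification that $\langle A\cdot\langle Y\rangle\rangle=\langle A\cdot Y\rangle$ by linearity of the action is correct and merely makes explicit a point the paper leaves implicit.
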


\bigskip




\begin{thebibliography}{99}                                                                                               %
\bibitem {BCZ}\textsc{C.\ Bachoc, A.\ Couvreur and G. Z\'{e}mor}, Towards a
function field version of Freiman's Theorem, Algebraic Combinatorics,
\textbf{4} (2018), 501-521.

\bibitem {BL}\textsc{V. Beck \ }and \textsc{C. Lecouvey,} Additive
combinatorics methods in associative algebras, Confluentes Math., \textbf{9}
(2017), 3-27.



\bibitem {Die}\textsc{G. T. Diderrich}, On Kneser's addition theorem in
groups, Proc. Ams.\ \textbf{38} (1973), 443-451.

\bibitem {EL2}\textsc{S. Eliahou.\ }and \textsc{C. Lecouvey,} On linear
versions of some addition theorems,\textit{\ }Linear Algebra and multilinear
algebra, \textbf{57} (2009), 759-775.

\bibitem {freiman}\textsc{Fre\u{\i}man, G. A.}, {Foundations of a structural
theory of set addition}, {Translations of Mathematical Monographs}, {Vol 37},
{American Mathematical Society, Providence, RI}, ({1973)}.



\bibitem {Gr}\textsc{D. Grynkiewicz.\ }Structural additive theory,
Developments in Mathematics, vol 30 (2013).

\bibitem {Hami2}\textsc{Y. O.\ Hamidoune, }On the connectivity of Cayley
digraphs,\textit{\ }Europ. J. Comb., \textbf{5} (1984), 309-312.

\bibitem {Hou0}\textsc{X. D.\ Hou, K.\ H. Leung and Xiang. Q,} A
generalisation of an addition theorem of Kneser, Journal of Number Theory
\textbf{97} (2002), 1-9.

\bibitem {Hou}\textsc{X. D.\ Hou, }On a vector space analogue of Kneser's
theorem, Linear Algebra and its Applications \textbf{426} (2007) 214-227.

\bibitem {Kain}\textsc{F.\ Kainrath,} On local half-factorial orders, in
Arithmetical Properties of Commutative Rings and Monoids, Chapman \& Hall/CRC,
Lect. Notes. Pure Appl. Math. 241-316 (2005).

\bibitem {Kem}\textsc{J. H.\ B.\ Kemperman,} On complexes in a semigroup,
Indag.\ Math.\ \textbf{18} (1956), 247-254.

\bibitem {L}\textsc{S.\ Lang,} \textit{Algebra, }Graduate Texts in
Mathematics, Springer-Verlag New York Inc (2005).

\bibitem {Lec}\textsc{C.\ Lecouvey, }Pl\"{u}nnecke and Kneser type theorems
for dimension estimates, Combinatorica, 34, (2014) 331-358.

\bibitem {Mur}\textsc{B. Murphy,} Group action combinatorics, preprint arXiv: 190713569.

\bibitem {Murp2}\textsc{B. Murphy,} Group action in arithmetic combinatorics,
Ph.D. Thesis, University of Rochester (2016).

\bibitem {MZ}\textsc{D. Mirandola, G. Zemor,} Critical pairs for the product
singleton bound, IEEE Transactions on Information Theory, 61 (2015) 4928-4937.

\bibitem {Nath}\textsc{M. B. Nathanson,} \textit{Additive Number Theory:
Inverse Problems and the Geometry of Sumsets}, Graduate Text in Mathematics
\textbf{165}, Springer-Verlag New York (1996).

\bibitem {Ols}\textsc{J. E.\ Olson,} On the sum of two sets in a group, J.
Number Theory \textbf{18} (1984), 110-120.

\bibitem {Ruz}\textsc{I. Z.\ Ruzsa,} Sumsets and structure, Combinatorial
Number Theory and additive group theory, Springer New York (2009).

\bibitem {Tao}\textsc{T. Tao,} Non commutative sets of small
doublings\textit{,} European Journal of Combinatorics \textbf{34} (2013), 1459-1465.

\bibitem {Tao3}\textsc{T. Tao,} Product set estimates for non-commutative
groups\textit{, }Combinatorica 28 (2009), 547-594.
\end{thebibliography}
\end{document}